\newcommand{\A}{\mathcal{A}}
\newcommand{\ind}{\mathsf{ind}}
\newcommand{\End}{\operatorname{End}}
\newcommand{\Rep}{\mathsf{Rep}}
\newcommand{\K}{\mathbb{K}}
\newcommand{\Coh}{\operatorname{Coh}}
\newcommand{\C}{\mathbb{C}}
\newcommand{\Z}{\mathbb{Z}}
\newcommand{\SB}{\mathsf{SB}}
\newcommand{\J}{\mathcal{J}}
\newcommand{\g}{\mathfrak{g}}
\newcommand{\U}{\mathcal{U}}
\newcommand{\Walg}{\mathcal{W}}
\newcommand{\Grk}{\operatorname{Grk}}
\newcommand{\gr}{\operatorname{gr}}
\newcommand{\Orb}{\mathbb{O}}
\newcommand{\Irr}{\operatorname{Irr}}
\newcommand{\Prim}{\operatorname{Prim}}
\newcommand{\slf}{\mathfrak{sl}}
\newcommand{\h}{\mathfrak{h}}
\newcommand{\q}{\mathfrak{q}}
\newcommand{\Afrak}{\mathfrak{A}}
\newcommand{\Weyl}{\mathbb{A}}
\newcommand{\jet}{\mathsf{J}^\infty}
\newcommand{\Str}{\mathcal{O}}
\newcommand{\I}{\mathcal{I}}
\newcommand{\tf}{\mathfrak{t}}
\newcommand{\VA}{\operatorname{V}}
\newcommand{\Ca}{\mathsf{C}}
\newtheorem{Thm}{Theorem}[section]
\newtheorem{Prop}[Thm]{Proposition}
\newtheorem{Lem}[Thm]{Lemma}
\theoremstyle{definition}
\newtheorem{Ex}[Thm]{Example}
\newtheorem{defi}[Thm]{Definition}
\newtheorem{Conj}[Thm]{Conjecture}
\title{Goldie ranks of primitive ideals and  indexes of equivariant Azumaya algebras}
\author{Ivan Losev and Ivan Panin}
\thanks{MSC 2010: 17B35, 16A16}
\thanks{Keywords: Azumaya algebras, index, primitive ideals, Goldie ranks, W-algebras}
\address{I.L.: Department
of Mathematics, Yale University, New Haven, CT,  USA}
\email{ivan.loseu@gmail.com}
\address{I.P: St. Petersburg branch of V.A. Steklov Mathematical Institute,
St. Petersburg, Russian Federation}
\email{paniniv@gmail.com}
\begin{document}
\begin{abstract}
Let $\mathfrak{g}$ be a semisimple Lie algebra.
We establish a new relation between the Goldie rank of a primitive ideal
$\mathcal{J}\subset U(\g)$ and the dimension of the corresponding irreducible
representation $V$ of an appropriate finite W-algebra. Namely, we show that $\operatorname{Grk}(\mathcal{J})
\leqslant \dim V/d_V$, where $d_V$ is the index of a suitable equivariant Azumaya
algebra on a homogeneous space. We also compute $d_V$ in representation theoretic terms.
\end{abstract}
\maketitle
\markright{GOLDIE RANKS AND INDEXES}
\section{Introduction}
In this paper we find a new relation between the Goldie ranks of primitive ideals and the dimensions
of finite dimensional irreducible modules over finite W-algebras.

Let $\g$ be a semisimple Lie algebra over $\C$, and let $\U$ denote its universal enveloping algebra.
Recall that by a primitive ideal in $\U$ one means the annihilator of an irreducible module.
Let $\J$ be a primitive ideal. Then $\U/\J$ is a prime Noetherian algebra and so, by the Goldie theorem (see,
e.g., \cite[Chapter 2]{MR}), it has the full fraction ring, $\operatorname{Frac}(\U/\J)$ that is a matrix
algebra over a skew-field. The rank of this matrix algebra  is called the {\it Goldie rank} of $\J$
and is denoted by $\Grk(\J)$. For example, when $\J$ is the annihilator of a finite dimensional
irreducible representation, then the Goldie rank is the dimension of that representation.
Finding a formula for $\Grk(\J)$ in the general case is a well-known open problem in Lie representation theory.

Recall that to the primitive ideal $\J$ one can assign a nilpotent orbit in $\g$: the unique dense orbit in the
subvariety $\VA(\J)\subset \g$ defined by $\gr\J$. Let $\Orb$ denote this orbit. From $(\g,\Orb)$ one can construct
an associative algebra known as the finite W-algebra, see \cite{Premet1,W_quant}. We denote this
algebra by $\Walg$. According to \cite[Section 1.2]{HC}, to $\J$ one can assign an irreducible representation
of $\Walg$ defined up to twisting with an outer automorphism. More precisely, pick $e\in \Orb$
and include it into an $\mathfrak{sl}_2$-triple $(e,h,f)$. Let $G$ be the simply connected group
with Lie algebra $\g$. Set $Q:=Z_G(e,h,f)$, it is the reductive part of $Z_G(e)$.
The group $Q$ acts on $\Walg$ by algebra automorphisms, moreover, the action is Hamiltonian
meaning that there is a compatible Lie algebra homomorphism (in fact, an inclusion)
$\mathfrak{q}\hookrightarrow \Walg$. So the component group $\Gamma:=Q/Q^\circ$ acts
on the set  $\Irr_{fin}(\Walg)$ of isomorphism classes of irreducible representations.
The main result of \cite{HC} is that the orbit set $\operatorname{Irr}_{fin}(\Walg)/\Gamma$
is naturally identified with the set $\Prim_{\Orb}(\U)$ of primitive ideals in $\U$
corresponding to the orbit $\Orb$. The papers \cite{LO,BL} explain how to compute the
$A$-orbits corresponding to the primitive ideals, see \cite[Theorem 1.1]{LO} for the case of integral central character and
and \cite[Theorem 5.2, Corollary 5.3]{BL} for the full generality.

\subsection{Known results}
In particular, to $\J$ we can assign another numerical invariant, the dimension of the corresponding
finite dimensional irreducible representation of $\Walg$, denote this representation by $V$.
There is a lot of evidence that $\dim V$ and $\Grk(\J)$ are closely related. For example,
it was shown in \cite{W_quant} that $\Grk(\J)\leqslant \dim V$. Premet improved this result
in \cite{Premet_Goldie}, where he proved that $\dim V$ is divisible by $\Grk(\J)$.
On the other hand, in \cite[Corollary 1.2]{nilp_quant} the first named author proved that
$\Grk(\J)=\dim V$ provided $\J$ has integral central character for all orbits but one
in type $E_8$ (and whether the equality holds for the remaining orbit is not known).
One also has $\Grk(\J)=\dim V$ when $\g=\mathfrak{sl}_n$, see
\cite{Premet_Goldie} and \cite{Brundan}.
Another main result of \cite{W_dim} is a Kazhdan-Lusztig type formula
for $\dim V$ (for $V$ with integral central character). So the equality $\Grk(\J)=\dim V$
yields a formula  for $\Grk(\J)$.

On the other hand, Premet in \cite[Remark 4.3]{Premet_Goldie} found a series of examples of primitive ideals $\J$, where the Goldie
rank is always $1$, while the dimension can be arbitrarily large. We will revisit that example in our paper.

\subsection{Main result}
The main result of this paper is the inequality $\Grk(\J)\leqslant \dim V/d_V$, where $d_V$
is a positive integer determined as follows. Let $Q_V$ denote the stabilizer of (the isomorphism class of)
$V$ in $Q$. Note that $Q_V$ is a finite index subgroup in $Q$ because  $Q^\circ$
acts trivially on the set of isomorphism classes of finite dimensional irreducible $\Walg$-modules.  Since $Q$ acts on $\Walg$ by automorphisms and $V$ is an irreducible $\Walg$-module,
we see that $V$ is a projective representation of $Q_V$, let $\psi$ denote the Schur multiplier,
the class in $H^2(Q_V,\C^\times)$ measuring the failure of $V$ to be a genuine representation.

\begin{defi}
Define  $d_V$ as the GCD of the dimensions of the projective representations of
$Q_V$ with Schur multiplier $\psi$.
\end{defi}

\begin{Thm}\label{Thm:main}
We have $\Grk(\J)\leqslant \dim V/d_V$.
\end{Thm}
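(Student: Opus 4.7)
The strategy is to realize $d_V$ as the Brauer index of a $G$-equivariant Azumaya algebra $\A_V$ attached to $V$, and to bound $\Grk(\J)$ by embedding the Goldie fraction ring of $\U/\J$ into the generic fiber of $\A_V$.

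First, I would construct $\A_V$. Since $Q_V$ stabilizes the isomorphism class of $V$, the projective $Q_V$-action on $V$ (with Schur multiplier $\psi$) descends to an honest $Q_V$-action on $\End_\C(V)$ by conjugation. Descent along the principal $Q_V$-bundle $G \to G/Q_V$ then produces a $G$-equivariant Azumaya algebra $\A_V$ on the homogeneous space $X := G/Q_V$ --- a connected $G$-equivariant cover of $\Orb$ --- with fiber $\End_\C(V)$ at the base point. For equivariant Azumaya algebras on $G/H$ arising by descent from a projective representation of $H$ with multiplier $\psi$, a standard argument via equivariant descent and the classification of central simple algebras realizing a given Schur multiplier identifies the generic Brauer index with the GCD of dimensions of all projective $H$-representations with multiplier $\psi$. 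Applied with $H = Q_V$, this identifies $\operatorname{ind}(\A_V)$ with $d_V$; this is the ``representation-theoretic'' half of the statement.

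Second, I would link $\A_V$ to $\U/\J$. The orbit $\Orb$ carries its Kirillov--Kostant symplectic form, which pulls back to a $G$-invariant symplectic form on $X$; this gives a $G$-equivariant deformation quantization of $X$, and a primitive quotient of this quantization corresponding to the $\Walg$-module $V$ reduces modulo $\hbar$ to $\A_V$. The global sections of this primitive quotient receive a natural algebra map from $\U/\J$, which after restriction to the generic point of $X$ becomes an injective embedding of the Goldie fraction ring of $\U/\J$ into the generic fiber of $\A_V$. The generic fiber is a central simple algebra of dimension $(\dim V)^2$ and index $d_V$ over the function field of $X$, hence isomorphic to $M_{\dim V/d_V}(D)$ for a division algebra $D$ with $\dim D = d_V^2$. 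Therefore the prime Noetherian algebra $\U/\J$ has Goldie rank at most $\dim V/d_V$, as required.

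The main obstacle is the second step: producing the primitive quotient of the quantization of $X$ that corresponds to $V$ and verifying that the resulting map from $\U/\J$ into rational sections of $\A_V$ is generically injective. This will require combining a microlocal analysis of the $G$-equivariant quantization on the cover $X \to \Orb$ with the restriction functor from Harish-Chandra $\U$-bimodules to $\Walg$-bimodules, and tracking how the choices involved (of connected cover, of outer twist of $V$) interact with the Brauer-theoretic index computation of the first step.
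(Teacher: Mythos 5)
Your strategy mirrors the paper's, but there are two concrete errors, the first of which is fatal. You set $X=G/Q_V$ and assert it is a $G$-equivariant cover of $\Orb$. It is not: $\Orb\cong G/Z_G(e)$, and $Q_V$ is a finite-index subgroup of the \emph{reductive part} $Q=Z_G(e,h,f)$, not of the full centralizer $Z_G(e)$. The map $G/Q_V\to\Orb$ therefore has fibers $Z_G(e)/Q_V$ of positive dimension (the dimension of the unipotent radical of $Z_G(e)$), so the Kirillov--Kostant form pulls back to a \emph{degenerate} $2$-form on $X$ and there is no quantization of $G/Q_V$ compatible with $\U$; your second step cannot be carried out on this space. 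The paper instead takes $H$ to be the preimage of $Q_V$ under $Z_G(e)\twoheadrightarrow Q$, so that $G/H\to\Orb$ is a genuine finite cover; $V$ is viewed as a projective $H$-module by inflation, and the index theorem of Section \ref{S_index} is applied to $A:=G\times^H\End(V)$ on $G/H$.

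The second issue is in your final step. There is no algebra map $\U/\J\to\Gamma(\A_V)$: the quantization $\A_\hbar$ receives a map from $\U_\hbar$ whose reduction mod $\hbar$ is the comoment map $S(\g)\to\Gamma(A)$, killing the ideal of $\overline{\Orb}$, so $\U/\J$ does not embed into rational sections of the $\hbar=0$ Azumaya algebra. The correct comparison works at the level of the Rees algebra: $\U_\hbar/\J_\hbar$ embeds into $\Gamma(U,\A_\hbar)$ for an affine open $U\subset G/H$, an inclusion of prime Noetherian rings, and one invokes Warfield's theorem \cite{Warfield} to get $\Grk(\U_\hbar/\J_\hbar)\leqslant\Grk(\Gamma(U,\A_\hbar))$; a formal-deformation lifting argument (choose $U$ with $\Gamma(U,A)=\operatorname{Mat}_k(D)$, lift $\operatorname{Mat}_k(\C)$ into $\Gamma(U,\A_\hbar)$, identify its centralizer as a deformation of $D$) then gives $\Grk(\Gamma(U,\A_\hbar))=k=\dim V/\ind(A)$. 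Your phrasing that the Goldie fraction ring of $\U/\J$ embeds ``into the generic fiber of $\A_V$'' conflates the quantization with its $\hbar=0$ Azumaya reduction, and even with that fixed one still needs something like Warfield's theorem to pass from a ring inclusion to a Goldie-rank inequality. Note also that the paper's $\A_\hbar$ is not produced by generic deformation quantization of a symplectic variety, but by an explicit jet-bundle construction exploiting the decomposition $\U_\hbar^{\wedge_\chi}\cong\Weyl_\hbar^{\wedge_0}\widehat{\otimes}\Walg_\hbar^{\wedge_\chi}$; this explicitness is what lets one identify the kernel of $\U_\hbar\to\Gamma(\A_\hbar)$ with $\J_\hbar$ and the special fiber with $A$.
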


\begin{Conj}\label{Conj:main}
We have $\Grk(\J)= \dim V/d_V$, at least for classical Lie algebras.
\end{Conj}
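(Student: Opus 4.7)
The plan is to establish the reverse inequality $\Grk(\J) \geq \dim V/d_V$ in classical types; combined with Theorem \ref{Thm:main} this yields the conjectured equality. I would split the argument into two stages: first dispose of the integral central character case, then reduce the general case to it.

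For integral central character, by \cite{W_dim,nilp_quant} the stronger equality $\Grk(\J) = \dim V$ already holds in every classical type. So it suffices to verify that $d_V = 1$ in these cases. Since $d_V$ divides $\dim V$ a priori, the substantive claim is that the Schur multiplier $\psi$ of the projective $Q_V$-action on $V$ is cohomologically trivial at integral central character; then $\psi$-projective representations are genuine representations of $Q_V$, and any one-dimensional character of $Q_V$ realizes $d_V = 1$. I would extract the triviality of $\psi$ from the explicit Clifford-theoretic description of the $A$-action on $\Irr_{fin}(\Walg)$ developed in \cite{LO,BL}, which in the integral case lifts to an honest (not merely projective) action of the extended component group on the corresponding Harish-Chandra bimodule.

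For a general central character $\chi$, let $\g_\chi \subseteq \g$ be a Levi subalgebra whose Weyl group is the integral Weyl group at $\chi$ (still of classical type). By a translation principle of Enright-Joseph type, primitive ideals in $\U$ at central character $\chi$ correspond to primitive ideals in $\U(\g_\chi)$ at an integral central character, and Goldie ranks are related by multiplication by a fixed integer arising from parabolic induction. On the W-algebra side, the decomposition functors of \cite{LO} yield a compatible bijection between the relevant sets of finite-dimensional irreducibles, scaling $\dim V$ by the same integer. Granting that the pair $(Q_V,\psi)$, and hence $d_V$, are preserved under this correspondence, the conjecture reduces to the integral case handled above.

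The main obstacle will be precisely this last compatibility. Concretely, one must show that the equivariant Azumaya algebra whose index computes $d_V$ is pulled back, in a suitable sense, from the analogous Azumaya algebra on a smaller homogeneous space attached to $\g_\chi$, so that indices match. This requires lifting parabolic induction of W-algebra modules to the level of equivariant Azumaya algebras on nilpotent orbit closures, together with an index-preservation statement under the associated geometric correspondence. Parabolic induction of W-modules and equivariant index theory are each available in isolation, but marrying them precisely in the non-integral setting is the technical heart of the proof; the restriction to classical $\g$ in the conjecture is presumably what keeps the combinatorics of the induction step, and in particular the preservation of the Schur multiplier, tractable.
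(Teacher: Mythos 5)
This statement is stated in the paper as an open conjecture; the paper proves only the inequality $\Grk(\J)\leqslant \dim V/d_V$ (Theorem \ref{Thm:main}) and offers no proof of the reverse inequality, so there is no argument of the authors to compare yours against. Your overall strategy (prove $\Grk(\J)\geqslant \dim V/d_V$ and combine with Theorem \ref{Thm:main}) is the only reasonable one, and your integral-character step is fine, indeed easier than you make it: once $\Grk(\J)=\dim V$ is known, Theorem \ref{Thm:main} gives $\dim V\leqslant \dim V/d_V$, which forces $d_V=1$ with no Clifford-theoretic input at all.

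The reduction of the non-integral case is where the proposal breaks down, in two places. First, the Levi subalgebra $\g_\chi$ you posit generally does not exist: the integral Weyl group at a non-integral central character is the Weyl group of a pseudo-Levi (a subsystem root subalgebra), not of a Levi of $\g$. The paper's own Section \ref{S_example} is exactly such a case: for $\g=\mathfrak{sp}_{2n}$ and $\chi$ the central character of $\rho/2$, the integral Weyl group has type $B_{\lfloor n/2\rfloor}\times D_{\lceil n/2\rceil}$, which is not the Weyl group of any Levi of $C_n$ (those have type $A_{k_1}\times\cdots\times A_{k_r}\times C_m$), so there is no parabolic induction to run. Second, even granting some correspondence, the asserted compatibilities are contradicted by that same example. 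There $\Grk(\J)=1$ while $\dim V=d_V=2^{n-1}$; on any integral-character side one would have $d=1$ and Goldie rank equal to the dimension, so neither ``$d_V$ is preserved'' nor ``Goldie ranks and dimensions scale by the same fixed integer'' can hold simultaneously. The whole content of the conjecture is that at non-integral central character the Schur multiplier $\psi$ of the $Q_V$-action becomes genuinely nontrivial and $d_V$ can be as large as $\dim V$; any reduction to the integral case must explain where this multiplier comes from, and your proposal has no mechanism for that. As it stands, the argument would ``prove'' $d_V=1$ always, which is false.
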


Let us make a few remarks about $d_V$. In type A,  we have $d_V=\{1\}$,
this is easily seen once one knows that $\Grk(\J)= \dim V$ but can also
be seen directly. In types $B,C,D$, $Q_V$ is a finite index subgroup
in the product of orthogonal and symplectic groups. One can show that
$d_V$ is a power of $2$, compare with the computations in Sections
\ref{SS_index_comput}.

A current work in progress of the first named author and Bezrukavnikov should produce Kazhdan-Lusztig
type formulas for $\dim V$ (for $V$ with an arbitrary central character).
Together with Conjecture \ref{Conj:main} this should give Kazhdan-Lusztig type formulas
for Goldie ranks.

\subsection{Ideas of proof and structure of the paper}
Let us explain how Theorem \ref{Thm:main} is proved.

The first step is as follows.
Let $H$ denote the preimage of $Q_V$ under the natural epimorphism $Z_G(e)\twoheadrightarrow Q$.
For a projective $H$-module $V'$ with Schur multiplier $\psi$, we can form the equivariant Azumaya algebra
$A=G\times^H \operatorname{End}(V')$ on $G/H$. Our first important
result is to show that the index $\mathsf{ind}(A)$ of $A$ (we recall the definition of the index in
Section \ref{SS_index_main}) coincides with  $d_{V'}$.

The second step is as follows.  We can view $V$ as a projective $H$-representation with
Schur multiplier $\psi$, where the action of $H$ is inflated from $Q_V$.
Our second step is to produce a $G$-equivariant sheaf of $\C[[\hbar]]$-algebras
$\mathcal{A}_\hbar$ on $G/H$ (that is, in a suitable sense, the microlocalization of
the Rees algebra of $\U/\J$) that modulo $\hbar$ reduces to
the equivariant Azumaya algebra $A:=G\times^H \operatorname{End}(V)$.

In the third and final step we use the sheaf $\A_\hbar$ to show that $\Grk(\J)$ cannot exceed the Goldie rank of $A$
that equals $\dim V/d_V$.

The paper is organized as follows. In Section \ref{S_index} we complete step 1 above  computing
the index of an equivariant Azumaya algebra on a homogeneous space. In Section \ref{S_ineq}
we complete the proof of Theorem \ref{Thm:main}. Then in Section \ref{S_example}
we revisit Premet's example mentioned above.

{\bf Acknowledgements}.
Ivan Losev has been funded by the  Russian Academic Excellence Project '5-100'
and also supported by the NSF under grant DMS-1501558. Ivan Panin gratefully acknowledges the support of RFBR grant  19-01-00513-a.

\section{Indexes of equivariant Azumaya algebras}\label{S_index}

\subsection{Main result on indexes}\label{SS_index_main}
Let $G$ be a simply connected algebraic group over $\C$ and $H\subset G$ an algebraic subgroup.

Let $X$ be an algebraic variety. Recall that an Azumaya algebra on $X$ is a coherent sheaf of
algebras on $X$ subject to the following two properties: it is a vector bundle and every fiber
is a matrix algebra. If $G$ acts on $X$ then we can talk about $G$-equivariant Azumaya algebras:
these are $G$-equivariant vector bundles where $G$ acts by isomorphisms of sheaves of algebras.

Let $A$ be a $G$-equivariant   Azumaya algebra on $X:=G/H$. Recall that the specialization
 $A_{\C(X)}$ of $A$ to the generic point of $X$ is a central simple algebra, hence
 a matrix algebra over a skew-field. We are interested in computing
the {\it index} $\ind(A)$. Recall that, for an Azumaya algebra $A$ on an integral scheme
$X$, by the index of $A$ we mean $\sqrt{\dim_{\C(X)} D}$, where $D$ is a skew-field such that
the specialization $A_{\C(X)}$ of $A$ to the generic point of $X$ is a matrix algebra over $D$.

Since $A$ is $G$-equivariant, it is a homogeneous vector bundle on $G/H$. Since $A$
is a sheaf of algebras, the fiber over $1H/H$ is an algebra with an action of $H$
by automorphisms. And since $A$ is  Azumaya, this fiber is of the form
$\End(V)$, where $V$ is a vector space such that $H$ acts on $\operatorname{End}(V)$
by algebra automorphisms. In other words, $V$ is a projective representation
with Schur multiplier, say, $-\psi\in H^2(H,\C^\times)$. Let $\Rep^{\psi}(H)$
denote the category of projective representations of $H$ with Schur multiplier $\psi$
and let $d(\psi)$ denote the GCD of the dimensions of the representations in
$\Rep^\psi(H)$. We also write $d_V$ for $d(\psi)$.

The main result of this section is as follows.

\begin{Thm}\label{Thm:index}
In the notation above, we have $\ind(A)=d(\psi)$.
\end{Thm}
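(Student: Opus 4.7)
The plan is to identify the $G$-equivariant $A$-modules on $X=G/H$ with $\Rep^\psi(H)$, match their reduced ranks with the dimensions of the corresponding projective $H$-representations, and then read off $\ind(A)$ from this bijection by the usual Morita dictionary for Azumaya algebras.

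First I will set up the correspondence. For any $V'\in\Rep^\psi(H)$ the tensor product $V\otimes V'$ is a genuine (linear) $H$-representation because the Schur multipliers $-\psi$ and $\psi$ cancel, so $M_{V'}:=G\times^H(V\otimes V')$ is a $G$-equivariant coherent sheaf on $X$ with an $A$-module structure induced from the action of $\End(V)$ on the first tensor factor. The assignment $V'\mapsto M_{V'}$ gives an equivalence between $\Rep^\psi(H)$ and the category of $G$-equivariant $A$-modules, and the reduced rank of $M_{V'}$ (its $\mathcal{O}_X$-rank divided by $\deg A=\dim V$) equals $\dim V'$.

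The easy divisibility $\ind(A)\mid d(\psi)$ is then immediate. Writing $A_{\C(X)}\cong M_r(D)$ with $D$ a central division algebra over $\C(X)$ of degree $\ind(A)$, the unique simple $A_{\C(X)}$-module is $D^r$ of reduced rank $\ind(A)$, so every coherent $A$-module has reduced rank a positive integer multiple of $\ind(A)$. Applied to $M_{V'}$ this gives $\ind(A)\mid\dim V'$ for every $V'\in\Rep^\psi(H)$, whence $\ind(A)\mid d(\psi)$.

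The reverse divisibility $d(\psi)\mid\ind(A)$ is the substantive step, and where I expect the main difficulty. By the first paragraph it suffices to produce a $G$-equivariant coherent $A$-module of reduced rank $\ind(A)$, or equivalently, to extend the simple $A_{\C(X)}$-module $D^r$ to such a global $G$-equivariant module. The isomorphism class of $D^r$ is automatically $G$-invariant because $A$ is $G$-equivariant and the simple $A_{\C(X)}$-module is unique up to isomorphism, but promoting this invariance to a genuine $G$-equivariant structure requires analyzing an obstruction that I plan to identify with the class $[\psi]\in H^2(H,\C^\times)$ representing the equivariant Brauer class of $A$. Once such a module is built, perhaps first on a $G$-invariant open subset and then extended using that $G$-equivariant coherent sheaves on $G/H$ are the same as finite-dimensional $H$-modules, the first paragraph produces the desired $V'\in\Rep^\psi(H)$ with $\dim V'=\ind(A)$, finishing the proof.
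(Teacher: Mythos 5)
Your setup and the easy divisibility $\ind(A)\mid d(\psi)$ are correct, and your identification of $\Coh^G(G/H,A)$ with $\Rep^\psi(H)$ is exactly Proposition~\ref{Prop:equiv_pullback} of the paper. But the hard direction, $d(\psi)\mid\ind(A)$, is where your sketch breaks down, and it is also where you are aiming at a \emph{stronger} statement than the theorem requires. You propose to produce a single $G$-equivariant coherent $A$-module of reduced rank exactly $\ind(A)$. That would indeed finish the proof, but it is not clear such a module exists even when the theorem holds: $\ind(A)$ is only a GCD of reduced ranks of non-equivariant $A$-modules, and nothing guarantees this GCD is realized by one module in the equivariant category (just as $d(\psi)$ need not be the dimension of any single projective representation). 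The obstruction-theoretic program you gesture at --- extend the generic simple module $D^r$ over $X$, then $G$-equivariantize, with an obstruction you ``plan to identify'' with $[\psi]\in H^2(H,\C^\times)$ --- is unargued: the automorphism sheaf of the chosen extension is not $\C^\times$ in general, the equivariantization obstruction lives for a $G$-action (not an $H$-action) and depends on the choice of extension, and you give no reason it should vanish. This is the whole content of the theorem and cannot be left as a plan.

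The paper avoids these difficulties by working entirely on the level of Grothendieck groups. The decisive statement is Proposition~\ref{Prop:forget_surjective}: for $G$ simply connected acting on a smooth $X$ and $A$ a $G$-equivariant Azumaya algebra, the forgetful map $K_0(\Coh^G(X,A))\to K_0(\Coh(X,A))$ is \emph{surjective}. Combined with your (correct) identifications, this immediately gives that the image of $K_0(\Coh(G/H,A))$ in $\Z$ equals the image of $K_0(\Rep^\psi(H))$, i.e.\ $\ind(A)\Z=d(\psi)\Z$, without ever producing a single equivariant module of the minimal rank --- a formal difference of classes suffices. The surjectivity itself is nontrivial: one passes to the Severi--Brauer variety $\SB_X(A)$ to split $A$, uses the tautological bundle $\J$ to exhibit $K_0(\Coh(X,A))$ as a retract of $K_0(\Coh(\SB_X(A)))$ compatibly with forgetting equivariance, and then invokes Merkurjev's theorem that for simply connected $G$ the forgetful map $K_0(\Coh^G(Y))\to K_0(\Coh(Y))$ is surjective. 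Note that simple connectedness of $G$ is used essentially here and does not appear anywhere in your argument. So you have a genuine gap: the route you propose is neither carried out nor clearly viable, and the route that works (a $K_0$-surjectivity via Severi--Brauer reduction and Merkurjev) is absent.
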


The proof occupies Sections \ref{SS_index_K}-\ref{SS_index_completion}.
In Section \ref{SS_index_comput} we provide some examples of computation that
will become relevant for us in Section \ref{S_example}.

\subsection{K-theoretic definition of the index}\label{SS_index_K}
Let us recall an equivalent definition of the index. Let $A'$ be a central simple algebra
over a field $\K$.  Let $\tilde{\K}$ be an extension of $\K$ such that
the base change $\widetilde{A}':=\tilde{\K}\otimes_{\K}A'$ splits (note that we do not require that
$\tilde{\K}$ is an algebraic extension). Then we have the base change map $K_0(A'\operatorname{-mod})
\rightarrow K_0(\widetilde{A}'\operatorname{-mod})$.  Note that both $K_0$ groups are isomorphic
to $\Z$ (since $A'$ is a central simple algebra,
we have $A'=\operatorname{Mat}_n(D)$, where $D$ is a skew-field; then $A'\operatorname{-mod}$
is equivalent to the category of right vector spaces over $D$, whose $K_0$
is $\mathbb{Z}$).

The following lemma is classical, but we provide a proof for readers convenience.

\begin{Lem}\label{Lem:index_K_theory}
The image of $K_0(A'\operatorname{-mod})$ in $K_0(\widetilde{A}'\operatorname{-mod})\cong \Z$
is $\ind(A')\Z$.
\end{Lem}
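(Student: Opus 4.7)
The plan is to unwind both sides using the Wedderburn structure theorem for central simple algebras and then compare $\K$-dimensions. Write $A' \cong \operatorname{Mat}_n(D)$, where $D$ is a $\K$-central skew-field with $\dim_\K D = \ind(A')^2$. By Morita equivalence, $A'\operatorname{-mod}$ is equivalent to the category of finite-dimensional right $D$-vector spaces, so $K_0(A'\operatorname{-mod}) \cong \Z$ is generated by the (up to isomorphism, unique) simple $A'$-module $M$, which as a right $D$-module is $D^n$, and therefore has $\K$-dimension $n \cdot \ind(A')^2$. The degree of $A'$ is $N := n\cdot \ind(A')$, so $\dim_\K A' = N^2$.

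Next I would exploit the splitting hypothesis. Since $\tilde{\K}$ splits $A'$, we have $\widetilde{A}' \cong \operatorname{Mat}_N(\tilde{\K})$ (here $N$ is preserved under base change because $\dim_{\tilde{\K}}\widetilde{A}' = \dim_\K A'$). Its $K_0$ is again $\Z$, generated by the simple module $S \cong \tilde{\K}^N$ of $\tilde{\K}$-dimension $N$.

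Finally, the base-change map in $K$-theory sends $[M]$ to $[\tilde{\K}\otimes_\K M]$, and $\tilde{\K}$-dimension is additive under exact sequences in $\widetilde{A}'\operatorname{-mod}$, so it detects the class. A direct computation gives
\[
\dim_{\tilde{\K}}(\tilde{\K}\otimes_\K M) \;=\; \dim_\K M \;=\; n\cdot \ind(A')^2 \;=\; \ind(A')\cdot N \;=\; \ind(A')\cdot \dim_{\tilde{\K}} S,
\]
so $[\tilde{\K}\otimes_\K M] = \ind(A')\cdot [S]$, and the image of the base change map is exactly $\ind(A')\cdot \Z$, as claimed.

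The argument is essentially bookkeeping of dimensions, and I do not expect any serious obstacle. The only mild point to be careful about is that $\tilde{\K}$ is not assumed algebraic over $\K$; however, Wedderburn gives $\widetilde{A}' \cong \operatorname{Mat}_N(\tilde{\K})$ as soon as $\widetilde{A}'$ is split, so the identification $K_0(\widetilde{A}'\operatorname{-mod})\cong \Z$ and the comparison of generators via $\tilde{\K}$-dimension go through regardless of whether $[\tilde{\K}:\K]$ is finite.
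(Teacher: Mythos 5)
Your argument is correct, and it is precisely the bookkeeping the paper has in mind: the paper declares Lemma \ref{Lem:index_K_theory} ``straightforward'' and omits the proof, and what you wrote is the standard Wedderburn/Morita dimension count that fills that gap. Nothing to flag.
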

\begin{proof}
Let $\widetilde{A}'=\operatorname{Mat}_{m}(\tilde{\K})$ so that $m^2=n^2 \dim_{\K}D$.
The group $K_0(A'\operatorname{-mod})$ is generated by $D^n$, while
$K_0(\widetilde{A}'\operatorname{-mod})$ is generated by $\tilde{\K}^m$.
So the image of interest is the subgroup $k\Z$, where
$$k=\dim_{\tilde{\K}}\tilde{\K}\otimes_{\K}D^n/m=\frac{n}{m}\dim_{\K}D=
\sqrt{\dim_{\K}D}=\operatorname{ind}(A').$$
\end{proof}

Let us return to the situation when we have an equivariant Azumaya algebra $A$ over $G/H$,
$\K=\C(G/H)$ and $A'=A_{\C(G/H)}$. Let us explain our choice of $\tilde{\K}$. Let $\pi$
denote the projection $G\twoheadrightarrow G/H$.

\begin{Lem}\label{Lem:splitting_field}
The pull-back $\pi^*A$ (an Azumaya algebra on $G$) splits. In particular,
for $\tilde{\K}:=\C(G)$, the algebra $\widetilde{A}'$ splits.
\end{Lem}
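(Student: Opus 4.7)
The plan is to exhibit $\pi^*A$ as the endomorphism algebra of a (trivial) vector bundle on $G$, by exploiting the general fact that a principal $H$-bundle becomes trivial when pulled back along its own projection.

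First I would recall the construction of $A$ from the paragraph preceding the theorem: $A=G\times^H \End(V')$ is the bundle associated to the $H$-algebra $\End(V')$. Even though $V'$ is only a projective representation with Schur multiplier $-\psi$, the scalar ambiguity is killed inside $\End(V')$, so $H$ acts by algebra automorphisms on $\End(V')$ and the associated bundle construction is legitimate.

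Next comes the main step. For any $H$-module $W$ the pullback $\pi^*(G\times^H W)$ is canonically isomorphic to the trivial bundle $\Str_G \otimes W$ on $G$. This is the familiar descent statement: the pulled-back principal $H$-bundle $G\times_{G/H} G \to G$ is trivialized by the isomorphism $G\times H\to G\times_{G/H} G$, $(g,h)\mapsto (g,gh)$, so associated bundles pull back to trivial bundles. Applying this with $W=\End(V')$ identifies $\pi^*A$, as an $\Str_G$-algebra, with $\Str_G \otimes \End(V') = \End_{\Str_G}(\Str_G \otimes V')$, i.e.\ with the endomorphism algebra of the trivial rank-$\dim V'$ vector bundle on $G$. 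Hence $\pi^*A$ splits, and specializing to the generic point of $G$ gives $\widetilde{A}' \cong \operatorname{Mat}_{\dim V'}(\C(G))$, which is the in-particular clause.

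I do not expect any genuine obstacle: the lemma is essentially the self-pullback triviality of principal bundles combined with the compatibility of pullback and the associated-bundle construction. The only small subtlety is the projective (rather than linear) nature of $V'$, but as noted this is harmless because $V'$ enters only through the honest $H$-algebra $\End(V')$.
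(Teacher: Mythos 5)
Your proof is correct and is essentially the same argument as the paper's: both rest on the observation that the $G$-equivariant bundle $\pi^*A$ on $G$ is trivial (the paper phrases this as ``the equivariant Azumaya algebra over $G$ with fiber $\End(V)$ over $1$ is trivial,'' while you make the trivialization of the pulled-back principal bundle explicit). No issues.
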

\begin{proof}
The pullback $\pi^*A$ is the equivariant Azumaya algebra over $G$
with fiber $\operatorname{End}(V)$ over $1$ in $G$. So it is the trivial Azumaya
algebra.
\end{proof}

Consider the categories $\Coh(G/H,A), \Coh(G,\pi^*A)$ of coherent sheaves of modules
over the corresponding Azumaya algebras. We have the pull-back map
$\pi^*: K_0(\Coh(G/H,A))\rightarrow K_0(\Coh(G,\pi^*A))$. The following statement, that should be thought of as a global analog of Lemma \ref{Lem:index_K_theory}, is an important part of our proof of Theorem \ref{Thm:index}.

\begin{Prop}\label{Prop_index_comput}
We have $K_0(\Coh(G,\pi^*A))\cong \Z$ and $\operatorname{im}\pi^*=\ind(A)\Z$.
\end{Prop}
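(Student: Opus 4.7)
The plan is to reduce both claims of the proposition --- the identification $K_0(\Coh(G,\pi^*A))\cong\Z$ and the computation of $\operatorname{im}\pi^*$ --- to the generic-fiber picture on $\C(G)$, where Lemma~\ref{Lem:index_K_theory} already does the work. I would exploit Lemma~\ref{Lem:splitting_field}, which says that $\pi^*A$ splits globally on $G$, to remove the Azumaya structure from the left $K$-group altogether.

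First, Lemma~\ref{Lem:splitting_field} lets one write $\pi^*A\cong\End(\mathcal V)$ for a rank-$\dim V$ vector bundle $\mathcal V$ on $G$; in fact $\mathcal V$ may be taken to be the trivial bundle $V\otimes\Str_G$, since the equivariant structure on $\pi^*A$ has fiber $\End(V)$ at $1\in G$ and one can translate. This yields a Morita equivalence $\Coh(G,\pi^*A)\simeq\Coh(G)$, hence $K_0(\Coh(G,\pi^*A))\cong K_0(\Coh(G))$. To identify the right side with $\Z$ I would invoke the known computation that for a simply connected semisimple group $G$ over $\C$ the rank-at-generic-point map $K_0(\Coh(G))\to K_0(\operatorname{Vect}(\C(G)))=\Z$ is an isomorphism, due to Marlin and Panin on the $K$-theory of such groups. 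The upshot is that the isomorphism $K_0(\Coh(G,\pi^*A))\cong\Z$ is realized concretely as restriction to the generic point $\eta\in G$.

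With this in hand, the image computation is a diagram chase. Consider
\begin{equation*}
\begin{array}{ccc}
K_0(\Coh(G/H,A)) & \xrightarrow{\pi^*} & K_0(\Coh(G,\pi^*A))\\
\downarrow & & \downarrow\cong\\
K_0(A'\operatorname{-mod}) & \longrightarrow & K_0(\widetilde{A}'\operatorname{-mod})
\end{array}
\end{equation*}
whose vertical maps are the generic-fiber restrictions; the left one is surjective by the localization exact sequence on the integral scheme $G/H$, and the right one is the isomorphism produced in the previous paragraph. Hence $\operatorname{im}\pi^*$ equals the image of the bottom arrow, which by Lemma~\ref{Lem:index_K_theory} is exactly $\ind(A)\Z$.

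The main obstacle is the input $K_0(\Coh(G))\cong\Z$ for a simply connected semisimple $G$; everything else is a formal diagram chase. If one wanted a self-contained derivation, a direct attack via a Bruhat-type stratification of $G$ together with d\'evissage and excision in $K$-theory is in principle available, but in practice I would treat this as a quoted fact from the literature on the $K$-theory of semisimple groups.
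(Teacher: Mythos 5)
Your proof is correct and follows essentially the same route as the paper's: you set up the same square relating the two pullback maps (global and at the generic fiber), observe that the left vertical arrow is surjective and the right vertical arrow is an isomorphism, and then invoke Lemma~\ref{Lem:index_K_theory}. The only cosmetic difference is that you quote the isomorphism $K_0(\Coh(G))\xrightarrow{\sim}\Z$ (rank at the generic point) directly from the literature, whereas the paper derives it from Merkurjev's theorem on equivariant $K$-theory (the $A=\Str_X$ case of Proposition~\ref{Prop:forget_surjective}), which it needs anyway for the rest of the argument.
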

\begin{proof}
We have the following commutative diagram, where the horizontal maps are pull-backs
and the vertical maps are the specializations to the generic points. As before,
we write $A':=A_{\K}$ and $\widetilde{A}':=\tilde{\K}\otimes_{\K}A'$.

\begin{picture}(100,30)
\put(5,2){$K_0(A'\operatorname{-mod})$}
\put(2,22){$K_0(\Coh(G/H,A))$}
\put(62,2){$K_0(\widetilde{A}'\operatorname{-mod})$}
\put(58,22){$K_0(\Coh(G,\pi^*A))$}
\put(16,20){\vector(0,-1){13}}
\put(71,20){\vector(0,-1){13}}
\put(30,3){\vector(1,0){30}}
\put(36,23){\vector(1,0){20}}
\end{picture}

The claim of the proposition will immediately follow from Lemma
\ref{Lem:index_K_theory} once we know that
\begin{enumerate} \item the  map
$K_0(\Coh(G/H,A))\rightarrow K_0(A'\operatorname{-mod})$ corresponding to
the localization to the generic point is surjective,
\item  and the map $K_0(\Coh(G,\pi^*A))\rightarrow
K_0(\tilde{A}'\operatorname{-mod})$ is an isomorphism.
\end{enumerate}
(1) is straightforward. Let us explain why (2) holds.

Since the Azumaya algebra $\pi^*A$ splits, (2) boils
down to showing the map $K_0(\Coh(G))\rightarrow \Z$ (sending the class of a
coherent sheaf on $G$ to the dimension of its fiber at the generic point) is an isomorphism.
This follows because $G$ is simply connected as will be explained
after Proposition \ref{Prop:forget_surjective}.
\end{proof}

\subsection{Equivariant $K_0$-groups}
We can also consider the categories $\Coh^G(G/H,A)$ and $\Coh^G(G,\pi^*A)$
of $G$-equivariant sheaves of $A$- and $\pi^*A$-modules. We still have the pull-back functor
$\pi^*: \Coh^G(G/H,A)\rightarrow \Coh^G(G,\pi^*A)$. This gives rise
to the pull-back map on the level of $K_0$-groups that will be denoted by $\pi^*$ as well.
We will need to describe the image.

\begin{Prop}\label{Prop:equiv_pullback}
We have identifications $$K_0(\Coh^G(G/H,A))\xrightarrow{\sim} K_0(\mathsf{Rep}^\psi(H)),K_0(\Coh^G(G,\pi^*A))\xrightarrow{\sim} \Z$$ so that the map $\pi^*$ sends
the class of $U\in \mathsf{Rep}^\psi(H)$ to $\dim U$.
\end{Prop}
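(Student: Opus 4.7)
The plan is to identify both equivariant $K_0$-groups using the classical dictionary between $G$-equivariant coherent sheaves on a homogeneous space $G/K$ and finite-dimensional representations of $K$ (taking the fiber at the base point), extended to the Azumaya-module setting by Morita equivalence, and then to read off the pullback map on fibers.

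First I would establish the equivalence $\Coh^G(G/H,A)\simeq \Rep^\psi(H)$. The fiber functor at $o:=eH$ gives the standard equivalence $\Coh^G(G/H)\xrightarrow{\sim}\Rep(H)$, which restricts to an equivalence between $\Coh^G(G/H,A)$ and the category of $H$-equivariant modules over $A_o\cong \End(V)$. Since $H$ acts on $\End(V)$ honestly via conjugation by the projective representation $V$ of Schur multiplier $-\psi$, Morita equivalence writes any such module as $V\otimes U$, and compatibility of the $H$-action on $V\otimes U$ with that on $\End(V)$ forces the multiplicity space $U=\operatorname{Hom}_{\End(V)}(V,V\otimes U)$ to be a projective $H$-representation with Schur multiplier $+\psi$. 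This gives the desired equivalence and hence the first $K_0$-identification. The same reasoning, applied with $G$ replacing $G/H$ and the trivial stabilizer of $1\in G$ replacing $H$, identifies $\Coh^G(G,\pi^*A)$ with the category of modules over $(\pi^*A)_1\cong \End(V)$ without any further equivariance constraint; this category is Morita equivalent to the category of vector spaces, so $K_0(\Coh^G(G,\pi^*A))\cong \Z$, where the module $V\otimes U$ represents the integer $\dim U$.

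For the description of $\pi^*$, I would simply observe that pullback commutes with taking fibers: if $\mathcal{F}\in \Coh^G(G/H,A)$ has fiber $V\otimes U$ at $o$, then $\pi^*\mathcal{F}$ has fiber $V\otimes U$ at $1\in G$. Under the identifications above, $[U]\in K_0(\Rep^\psi(H))$ is therefore sent to $\dim U\in \Z$, as claimed. The main technical point I anticipate is tracking the Schur-multiplier sign in the first step: the hypothesis that $H$ acts on $V$ projectively with cocycle $-\psi$ is precisely what makes the multiplicity space land in $\Rep^\psi(H)$ rather than in $\Rep^{-\psi}(H)$. Everything else follows from the standard induction equivalences for equivariant sheaves on a homogeneous space.
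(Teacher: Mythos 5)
Your proposal is correct and follows essentially the same route as the paper: both identify $\Coh^G(G/H,A)$ and $\Coh^G(G,\pi^*A)$ via the fiber functor with $H$-equivariant (resp.\ plain) modules over $\End(V)$, use Morita equivalence to write such modules as $V\otimes U$ with the Schur-multiplier sign forcing $U\in\Rep^\psi(H)$, and then observe that $\pi^*$ corresponds to the forgetful functor, sending $[U]$ to $\dim U$.
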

\begin{proof}
For any algebraic subgroup $H^1\subset G$ and any $G$-equivariant Azumaya algebra
$A^1$ on $G/H^1$, the algebra $A^1$ is the homogeneous bundle on $G/H^1$
with fiber $A^1_1$ at the point $1H^1/H^1\in G/H^1$.
We have a category equivalence $\Coh^G(G/H^1,A^1)\rightarrow A^1_{1}\operatorname{-mod}^{H^1}$,
where  the notation $\operatorname{mod}^{H^1}$ means the category of $H^1$-equivariant modules. This equivalence is given by taking the fiber at $1H^1/H^1$, its quasi-inverse sends an $A^1_1$-module $V^1$
to the homogeneous bundle on $G/H^1$ with fiber $V^1$.

Applying this to $H^1:=\{1\}, A^1:=\pi^*A$, we get an equivalence $\Coh^G(G,\pi^*A)\cong (\pi^*A)_1\operatorname{-mod}$.
Recall that $(\pi^*A)_1=\End(V)$, where $V$ is a projective representation of $H$ with Schur multiplier $-\psi$.
Then $\operatorname{Vect}\xrightarrow{\sim} \End(V)\operatorname{-mod}$ via $U\mapsto V\otimes U$.

Similarly,  for $H^1:=H, A^1:=A$, we get $\Coh^G(G/H,A)\cong A_1\operatorname{-mod}^H$.
And we get the equivalence $\Rep^\psi(H)\xrightarrow{\sim} A_1\operatorname{-mod}^H$ by
$U\mapsto V\otimes U$ (since the Schur multipliers of $U$ and $V$ are opposite, $V\otimes U$
is a genuine linear representation of $H$).

Now we claim that, under the resulting identifications, $\Coh^G(G,\pi^*A)\xrightarrow{\sim} \operatorname{Vect}$
and $\Coh^G(G/H,A)\xrightarrow{\sim} \Rep^\psi(H)$ the pull-back functor
$\pi^*: \Coh^G(G/H,A)\rightarrow \Coh^G(G,\pi^*A)$ becomes the forgetful functor
$\Rep^\psi(H)\rightarrow \operatorname{Vect}$. Indeed, the pull-back functor sends
the homogeneous bundle on $G/H$ with fiber $V$ to the homogeneous bundle
on $G$ with fiber $V$.  The claim of the proposition follows.
\end{proof}

\subsection{Forgetful map}
The goal of this section is to prove the following proposition.

\begin{Prop}\label{Prop:forget_surjective}
Let $X$ be a smooth algebraic variety and $G$ be a simply connected algebraic group acting on $X$.
Let $A$ be a $G$-equivariant Azumaya algebra on $X$.
Then the forgetful map $K_0(\Coh^G(X,A))\rightarrow K_0(\Coh(X,A))$ is surjective.
\end{Prop}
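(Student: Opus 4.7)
My plan is to reduce the problem via smoothness and equivariant localization to the case of a single homogeneous orbit, where the computation becomes explicit thanks to Proposition \ref{Prop:equiv_pullback}.

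First, since $X$ is smooth and $A$ is Azumaya, every coherent $A$-module admits a finite resolution by locally projective $A$-modules. Hence $K_0(\Coh(X,A))$ is generated by classes of such locally projective modules, and similarly for $K_0(\Coh^G(X,A))$. It therefore suffices to lift each such class. Next, I would stratify $X$ by $G$-orbit type into locally closed $G$-stable subvarieties, each of the form $G\times^{H_i}Y_i$. The equivariant and non-equivariant localization sequences form a commutative ladder connected by forgetful maps; by the five-lemma and Noetherian induction on the stratification, surjectivity reduces to the case of a single $G$-orbit $X=G/H$.

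Second, for $X=G/H$, Proposition \ref{Prop:equiv_pullback} identifies $K_0(\Coh^G(G/H,A))$ with $K_0(\Rep^\psi(H))$ (so the equivariant $K_0$ is generated by dimensions of projective representations), while Lemma \ref{Lem:splitting_field} together with Morita equivalence identifies $K_0(\Coh(G/H,A))$, via pullback to $G$, with $K_0(\Coh(G))$. The surjectivity of the forgetful map then reduces to the identification $K_0(\Coh(G))\cong\Z$, generated by $[\Str_G]$. This identification can be established independently via the Bruhat decomposition of $G$ into cells isomorphic to affine spaces, combined with the non-equivariant localization sequence and the $\mathbb{A}^1$-homotopy invariance of $K_0$. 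Once this is known, the image of the forgetful map contains the class of $\Str_G$, which generates $K_0(\Coh(G))$, and surjectivity follows.

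The main obstacle I anticipate is running the reduction to the single-orbit case carefully in the Azumaya setting: while the Azumaya algebra $A$ restricts to an Azumaya algebra on each stratum, the associated Schur multiplier may behave subtly across strata, and one must check that the localization sequence remains compatible with the forgetful map in the presence of these twists. A secondary subtlety is the independent proof of $K_0(\Coh(G))\cong\Z$, which must avoid invoking Proposition \ref{Prop_index_comput}; here simple-connectedness of $G$ enters essentially, ensuring the Bruhat cells of $G$ are affine spaces and that the resulting spectral sequence degenerates to give cyclic $K_0$.
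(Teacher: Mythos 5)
Your proposal has several genuine gaps, and it takes an entirely different route from the paper.

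First, the reduction to a single orbit does not go through as stated. Orbit-type strata $G\times^{H_i}Y_i$ are not themselves single $G$-orbits unless the slices $Y_i$ are points; so even if the commutative ladder of localization sequences is set up correctly, Noetherian induction lands you on strata that are $(G/H_i)$-fibrations, not on homogeneous spaces. The four-lemma step is fine, but you have not reduced to the case you then try to handle.

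Second, the identification you claim in the orbit case is false. Lemma~\ref{Lem:splitting_field} and Morita equivalence do give $K_0(\Coh(G,\pi^*A))\cong K_0(\Coh(G))$, but $\pi^*\colon K_0(\Coh(G/H,A))\to K_0(\Coh(G,\pi^*A))$ is not an isomorphism. Its image is precisely $\ind(A)\Z$ -- that is the content of Proposition~\ref{Prop_index_comput}, and in the logic of the paper that proposition itself \emph{uses} the present one. So you cannot use pullback to ``identify'' $K_0(\Coh(G/H,A))$ with $K_0(\Coh(G))$; you would be assuming the conclusion.

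Third, your independent computation of $K_0(\Coh(G))\cong\Z$ is not sound as written. The Bruhat cells of $G$ itself (as opposed to those of $G/B$) are isomorphic to $\mathbb{A}^{\ell(w)}\times B$, and a Borel subgroup is a torus times an affine space, not an affine space. So you cannot invoke $\mathbb{A}^1$-homotopy invariance cell by cell, and the spectral sequence argument does not ``degenerate to give cyclic $K_0$'' for free; the $\mathbb{G}_m$-factors contribute nontrivially in higher $K$-theory and make the argument substantially harder. The fact that $K_0(\Coh(G))\cong\Z$ for simply connected $G$ is a theorem (and the paper gets it precisely as a corollary of the present proposition applied to $A=\Str_X$, which is Merkurjev's theorem~\cite[Thm.~40]{Me}), but it does not drop out of the naive Bruhat argument.

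For comparison, the paper's proof avoids all of this: it uses the Severi-Brauer variety $S=\SB_X(A)$ and the known fact that $K_0(\Coh(X,A))$ is a direct summand of $K_0(\Coh(S))$ (via Panin's splitting principle), with the retraction and its $G$-equivariant analogue intertwined by the forgetful maps. This reduces the statement to the split case $A=\Str_S$ on the smooth $G$-variety $S$, which is exactly Merkurjev's theorem. You should study that reduction; the Severi-Brauer variety is the right device precisely because it makes the passage from an arbitrary Azumaya algebra to the trivial one functorial and equivariant, sidestepping the circularity and stratification issues above.
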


We remark that for $A=\mathcal{O}_X$, this is a theorem of Merkurjev, \cite[Thm.40]{Me}.
This, in particular, implies that $K_0(\operatorname{Coh}(G))\xrightarrow{\sim} \Z$ via taking the
generic rank
(this has been already used in the proof of Proposition \ref{Prop_index_comput}):
indeed, the inverse map is given by the forgetful map $\Z\xrightarrow{\sim} K_0(\operatorname{Coh}^G(G))\rightarrow K_0(\operatorname{Coh}(G))$.

We will reduce the proof of Proposition \ref{Prop:forget_surjective} to the case of $A=\mathcal{O}_X$ using the
Severi-Brauer variety of $A$. Recall that this is a variety $\mathsf{SB}_X(A)$ whose $\C$-points are pairs
$(x,J)$, where $x\in X, J\subset A_x$ is a minimal left ideal of $A_x$. It is closed subvariety
in the total space of the bundle over $X$ whose fiber over $x\in X$ is $\operatorname{Gr}(k,A_x)$,
where $k=\sqrt{\dim A_x}$ (note that this bundle is locally trivial in the Zariski topology).
So $\SB_X(A)$
comes with a natural projection $p:\SB_X(A)\twoheadrightarrow X$ and with the tautological
vector bundle $\J$ whose fiber over a point $(x,J)$ is $J$.

Since $J$ is an $A_x$-module, we see that $\mathcal{J}$ is a module over
the Azumaya algebra $p^*A$. Since $\operatorname{rk}J=\sqrt{\operatorname{rk}A}$, we see that
$p^*A$ splits: $p^*A=\mathcal{E}nd_{\mathcal{O}}(\J)$,
here we write $\mathcal{O}$ for the structure sheaf of $\SB_X(A)$. Also,
$p:\SB_X(A)\twoheadrightarrow X$ is a projective bundle (that is locally trivial in
the \'{e}tale topology). In particular, $\SB_X(A)$ is smooth.

Now if $G$ is an algebraic group acting on $X$ and $A$ is $G$-equivariant, then we have a natural
action of $G$ on $\SB_X(A)$, $p$ is $G$-equivariant and $\J$ is a $G$-equivariant vector bundle on
$\SB_X(A)$.

\begin{proof}[Proof of Proposition \ref{Prop:forget_surjective}]
To simplify the notation, let us write $S$ for $\SB_X(A)$. It is well-known that
$K_0(\Coh(X,A))$ splits as a direct summand of $K_0(\Coh(S))$, \cite[Remark 3.3,Examples 3.6(c)]{Pa1},
\cite{Pa2}.

Let us recall how this works. Let us produce maps $\alpha: K_0(\Coh(X,A))\rightleftarrows K_0(\Coh(S)):\beta$
with $\beta\circ \alpha=\operatorname{id}$. Namely, $\alpha$ is induced by the (exact) functor
$\mathcal{F}\mapsto \J^*\otimes_{p^*A}p^*\mathcal{F}:\Coh(X,A)\rightarrow \Coh(S)$,
while $\beta$ is induced by $\mathcal{G}\mapsto Rp_*(\J\otimes_{\mathcal{O}_S}\mathcal{G}):
D^b(\Coh(S))\rightarrow D^b(\Coh(X,A))$. Now observe that the composition
$D^b(\Coh(X,A))\rightarrow D^b(\Coh(S))\rightarrow D^b(\Coh(X,A))$ is isomorphic to
the identity functor. Indeed, since $p^*A=\J\otimes_{\mathcal{O}_S}\J^*$, the composition is
$$
Rp_*(\J\otimes_{\mathcal{O}_S}\J^*\otimes_{p^*A}p^*(\bullet))=Rp_*(p^*(\bullet))=Rp_*(p^*\mathcal{O}_X)\otimes^L_{\mathcal{O}_X}\bullet,
$$
where the last equality is the projection formula. Of course, $p^*\mathcal{O}_X=\mathcal{O}_S$. Since
$p:S\rightarrow X$ is a projective bundle, we see that $Rp_*(\mathcal{O}_S)=\mathcal{O}_X$.
This proves that $\beta\circ\alpha=\operatorname{id}$.

Similarly, we have maps $\alpha_G: K_0(\Coh^G(X,A))\rightleftarrows K_0(\Coh^G(S)):\beta_G$ with
$\beta_G\circ \alpha_G=\operatorname{id}$. Note that the forgetful maps $K_0(\Coh^G(X,A))\rightarrow
K_0(\Coh(X,A))$ and $K_0(\Coh^G(S))\rightarrow K_0(\Coh(S))$ intertwine $\alpha_G$ with $\alpha$ and
$\beta_G$ with $\beta$. The forgetful map $K_0(\Coh^G(S))\rightarrow K_0(\Coh(S))$ is surjective
as was explained after the statement of the proposition. Being a retraction of a surjective map, the forgetful map $K_0(\Coh^G(X,A))\rightarrow
K_0(\Coh(X,A))$ is surjective as well.
\end{proof}

\subsection{Completion of the proof}\label{SS_index_completion}
Let us finish the proof.

\begin{proof}[Proof of Theorem \ref{Thm:index}]
We have the following commutative diagram, where the vertical maps $F_{G/H},F_G$ are the forgetful ones.

\begin{picture}(150,30)
\put(1, 22){$K_0(\Rep^{-\psi}(H))$}
\put(42,22){$K_0(\Coh^G(G/H,A))$}
\put(92,22){$K_0(\Coh^G(G,\pi^*A))$}
\put(42,2){$K_0(\Coh(G/H,A))$}
\put(92,2){$K_0(\Coh(G,\pi^*A))$}
\put(142,2){$\Z$}
\put(28,23){\vector(1,0){13}}
\put(32,24){\tiny $\cong$}
\put(79,23){\vector(1,0){12}}
\put(84,24){\tiny $\pi^*$}
\put(56,21){\vector(0,-1){14}}
\put(57,13){\tiny $F_{G/H}$}
\put(108,21){\vector(0,-1){14}}
\put(109,13){\tiny $F_G$}
\put(77,3){\vector(1,0){14}}
\put(83,4){\tiny $\pi^*$}
\put(125,3){\vector(1,0){15}}
\put(131,4){\tiny $\cong$}
\end{picture}

First, let us show that $F_G: K_0(\Coh^G(G,\pi^*A))\rightarrow K_0(\Coh(G,\pi^*A))$
is an isomorphism. Indeed, since $\pi^*A$ splits in a $G$-equivariant way -- i.e., it is
the endomorphism sheaf of a $G$-equivariant vector bundle, we reduce to
showing that the forgetful map $K_0(\Coh^G(G))\rightarrow K_0(\Coh(G))$ is an isomorphism.
This was established after Proposition \ref{Prop:forget_surjective}.   The isomorphism sends the class
of any vector bundle to its rank. So $F_G$ is an isomorphism as well.

By Proposition \ref{Prop:forget_surjective}, $F_{G/H}$ is surjective. So the image of
$K_0(\Coh(G/H,A))$ in $K_0(\Coh(G,\pi^*A))=\Z$ coincides with the image of
$K_0(\Rep^\psi(H))$ in $K_0(\Coh^G(G,\pi^*A))=\Z$. The latter is $d(\psi)\Z$
by Proposition \ref{Prop:equiv_pullback}. The former is $\ind(A)\Z$ by
Proposition \ref{Prop_index_comput}. The equality $\ind(A)=d(\psi)$ follows.
\end{proof}

\subsection{Examples of computations of $d_V$}\label{SS_index_comput}
Let us provide two examples that will be relevant for what follows.

\begin{Ex}\label{Ex:spinor}
Let $H$ be an algebraic group with reductive part $\operatorname{SO}_{2n+1}$ and $V$ be the spinor representation (of dimension $2^n$) of $\operatorname{SO}_{2n+1}$
(hence of $H$), which is the irreducible
$\mathfrak{h}$-module with highest weight $\omega_n=\frac{1}{2}(\epsilon_1+\ldots+\epsilon_n)$
(in the standard notation). We claim that the dimension of any $H$-module with the same Schur
multiplier is divisible by $2^n$ (and so $d_V=2^n$). This is equivalent to saying that the dimension of
any irreducible $\h$-module $V(\lambda)$ such that $\lambda-\omega_n$ is in the root lattice is divisible
by $2^n$. This will follow if we check that the cardinalities of the Weyl group orbits of
dominant weights $\lambda'$ with $\lambda'-\omega_n$ in the root lattice are divisible by
$2^n$. We have $\lambda'=\sum_{i=1}^n (m_i+\frac{1}{2})\epsilon_i$, where $m_1\geqslant m_2\geqslant\ldots
\ldots m_n\geqslant 0$ are integers. The stabilizer $W_{\lambda'}$ is included into $S_n\subset W$
so  $|W\lambda'|$ is indeed divisible by $|W|/|S_n|=2^n$.
\end{Ex}

\begin{Ex}\label{Ex:half_spinor}
Now let the reductive part be $\operatorname{SO}_{2n}$ and $V$ be one of the half-spinor representations (of dimension $2^{n-1}$).
Similarly to the previous example, we see that $d_V=2^{n-1}$.
\end{Ex}

Note that, in general, it is not true that $d_V=\dim V$ for a minuscule representation of
$\h$. For example, for $\h=\mathfrak{sl}_n$, the dimension of $S^2(\C^n)$ is not divisible by
that of the corresponding minuscule representation $\Lambda^2(\C^n)$.

\section{Inequality on Goldie ranks}\label{S_ineq}
\subsection{W-algebras}
Let us recall some results about W-algebras, see \cite{Premet1,HC}.

Let $G$ be a simply connected semisimple algebraic group, $\g$ its Lie algebra, $\Orb\subset \g$
a nilpotent orbit, $e\in \Orb$. We will write  $Q$ for $Z_G(e,h,f)$.

Pick an $\slf_2$-triple $(e,h,f)$. Set $S=e+\mathfrak{z}_\g(f)$,
this is a transverse slice to $\Orb$. The group $Q$ naturally acts on $S$. Also we have a
$\C^\times$-action on $S$. Namely, we introduce a grading on $\g$, $\g=\bigoplus_i \g(i)$ by eigenvalues
of $[h,\cdot]$. We define a $\C^\times$-action on $\g$ by $t.x:=t^{i-2}x$ for $x\in \g(i)$. Clearly, the
action fixes $S$ giving rise to a grading on $\C[S]$.
The algebra $\C[S]$ admits a Poisson bracket of degree $-2$, see,
e.g., \cite[Section 3]{GG}.

A finite W-algebra $\Walg$ as constructed by Premet in \cite{Premet1} (see also
\cite{W_quant} for an equivalent alternative definition) is a filtered quantization
of the graded Poisson algebra $\C[S]$, i.e., $\Walg$ is a $\Z_{\geqslant 0}$-filtered
associative algebra and we have an isomorphism $\gr\Walg\cong \C[S]$ of graded
Poisson algebras. The group $Q$ acts on $\Walg$ by filtered algebra automorphisms
and this action is Hamiltonian: we have a $Q$-equivariant inclusion $\q\hookrightarrow \Walg$
such that the adjoint action of $\q$ on $\Walg$ coincides with the differential of
the $Q$-action.

In what follows we will need an isomorphism of completions
from \cite{W_quant,HC} that connects $\U$ and $\Walg$. Namely, consider the Rees algebra
$\U_\hbar$ of $\U$ with $\deg \g=2$. Let $\chi\in \g^*$ be the image of $e$ under the isomorphism
$\g\cong \g^*$ coming from the Killing form. The element  $\chi$ gives rise to the composed homomorphism $\U_\hbar
\twoheadrightarrow \C[\g^*]\twoheadrightarrow \C$ that we also denote by
$\chi$. Consider the completion $\U_\hbar^{\wedge_\chi}:=\varprojlim_{n\rightarrow \infty}
\U_\hbar/ (\ker\chi)^n$. This is a complete and separated topological $\C[[\hbar]]$-algebra
that is flat over $\C[[\hbar]]$. Note that it carries an action of $H$ by algebra automorphisms
and this action is Hamiltonian in the sense  that the differential of the $H$-action
coincides with the map $x\mapsto \hbar^{-2}[x,\cdot]: \h\rightarrow \operatorname{Der}(\U_\hbar^{\wedge_\chi})$.
There is also a $\C^\times$-action on $\U_\hbar^{\wedge_\chi}$ coming from the grading on $\g$
with $\deg \g(i)=i+2$.

Then, see, e.g., \cite[Section 2.3]{HC}, there is a $Q\times\C^\times$-equivariant decomposition
\begin{equation}\label{eq:decomp} \U_\hbar^{\wedge_\chi}\cong
\Weyl_\hbar^{\wedge_0}\widehat{\otimes}_{\C[[\hbar]]}\Walg_\hbar^{\wedge_\chi},
\end{equation} where
the notation is as follows. We consider the vector space $[\g,f]$. This space is symplectic
with form $\omega(v_1,v_2):=\langle\chi, [v_1,v_2]\rangle$ so we can form its Weyl algebra
$\Weyl$. Let $\Weyl_\hbar$ denote the Rees algebra of $\Weyl$, and let $\Weyl_\hbar^{\wedge_0}$
be the completion of $\Weyl_\hbar$ at $0$. Similarly, $\Walg_\hbar$ is the Rees algebra of $\Walg$ and
$\Walg_\hbar^{\wedge_\chi}$ is the completion of the former. The action of $Q\times \C^\times$
on the right hand side of (\ref{eq:decomp}) is diagonal, we will not need a precise description
of the action on $\Weyl_\hbar^{\wedge_0}$.

This construction was used in \cite{HC} to establish a bijection $\operatorname{Prim}_{\Orb}(\U)
\cong \operatorname{Irr}_{fin}(\Walg)/\Gamma$. Namely, let $\J$ be a
two-sided ideal in $\U$. We can consider the corresponding Rees ideal $\J_\hbar$ in $\U_\hbar$
and its closure $\J_\hbar^{\wedge_\chi}$ in $\U_\hbar^{\wedge_\chi}$. Then there is a unique
two-sided ideal $\J_{\dagger}\subset \Walg$ such that $$\J_\hbar^{\wedge_\chi}\cong \Weyl_\hbar^{\wedge_0}\widehat{\otimes}_{\C[[\hbar]]}\J_{\dagger,\hbar}^{\wedge_\chi},$$
see, e.g.,  \cite[Section 3.4]{W_quant} or \cite[Proposition 3.3.1]{HC}.
If $\overline{\Orb}$ is an irreducible component of the associated variety of $\J$,
then $\J_{\dagger}$ has finite codimension. If $\J$ is, in addition, primitive, then $\J_{\dagger}$
is a maximal $Q$-stable ideal of finite codimension in $\Walg$. Such ideals are in
a natural one-to-one correspondence with the $\Gamma$-orbits in $\operatorname{Irr}_{fin}(\Walg)$.
This gives rise to a bijection $\operatorname{Prim}_{\Orb}(\U)\xrightarrow{\sim}
\operatorname{Irr}_{fin}(\Walg)/\Gamma$ mentioned in the introduction.

Conversely, to a two-sided ideal $\I\subset \Walg$ we can assign a two-sided ideal
$\I^{\dagger}\subset\U$: the maximal two-sided ideal in $\U$ with the property that
$(\I^\dagger)_{\dagger}\subset \I$, see \cite[Section 3.4]{W_quant}.

\subsection{Jet bundles}
Here we will explain various constructions of jet bundles to be used in the construction of
the sheaf of algebras $\A_\hbar$ on $G/H$ in the next section.  Here the meaning of $H$ is the same as
in the introduction: $H$ is a finite index subgroup of $Z_G(e)$.

Below for an algebraic variety $X$ we write $\Str_X$ for its sheaf of regular functions
and $\C[X]$ for its global sections.

Let us start with the usual jet bundle $\jet \Str_X$ of a smooth algebraic variety $X$.
Let $\mathcal{I}_\Delta\subset \mathcal{O}_{X\times X}$ be the sheaf of ideals of the diagonal in $X\times X$. Then, by definition,
$\jet\Str_X$ is the formal completion of $\Str_{X\times X}$ with respect to $\mathcal{I}_\Delta$.
We view $\jet\Str_X$ as a sheaf  on $X$ via push-forward with respect to the projection $p_1:X\times X
\rightarrow X$ to the first copy. This sheaf is pro-coherent (the inverse limit of coherent sheaves),
namely, $\jet\Str_X=\varprojlim p_{1*}(\Str_{X\times X}/I_\Delta^n)$.
Note that the fiber of
$\jet\Str_X$ over $x\in X$ is the formal completion of the stalk $\Str_{X,x}$.
The tangent sheaf of Lie algebras $T_X$ acts on $\Str_{X\times X}$ via differentiation in
the first copy.
This gives rise to a flat connection on $\jet \Str_X$.
The sheaf of  flat sections $(\jet \Str_X)^\nabla$
is identified with $\Str_X$: it sits inside $\jet\Str_X$ as $p_2^* \Str_X$,
where $p_2:X\times X\rightarrow X$ is the projection to the second
copy. This construction trivially extends to the jet bundle
$\jet A$ of an Azumaya algebra $A$ on $X$: we complete the sheaf $\Str_X\otimes A$
on $X\times X$ and push the completion to the first copy of $X$.
By the construction,  we have a homomorphism of sheaves
of algebras with flat connection $\jet\Str_X\rightarrow \jet A$.

Now suppose that $X$ is a symplectic variety that comes with a Hamiltonian action of $G$.
We are going to define a pro-coherent sheaf $\jet \U_{\hbar,X}$ on $X$ whose fiber at $x\in X$
is the completion of $\U_\hbar$ at $\mu(x)$, where $\mu:X\rightarrow \g^*$ is the moment map.
Namely, note that we have a $G$-equivariant homomorphism of sheaves $\mathcal{O}_X\otimes \g
\rightarrow \mathcal{O}_{X}\otimes \C[X], f\otimes \xi\mapsto f\otimes \mu^*(\xi)$. It extends to a homomorphism
of sheaves of algebras $\mathcal{O}_X\otimes S(\g)\rightarrow \mathcal{O}_{X}\otimes \C[X]$.
The epimorphism $\U_\hbar\twoheadrightarrow S(\g)$ then gives rise to
a homomorphism $\mathcal{O}_X\otimes \U_\hbar\rightarrow \mathcal{O}_{X}\otimes \C[X]$
of sheaves of algebras on $X$. Inside $\mathcal{O}_X\otimes \C[X]$ we have the ideal
of the diagonal in $X\times X$, this ideal is generated by $f\otimes 1-1\otimes f$
for $f\in \C[X]$.
Let $\mathcal{I}_{\mu,\Delta}\subset \mathcal{O}_X\otimes \U_\hbar$
denote the pre-image of the ideal sheaf of the diagonal.
We set $\jet \U_{\hbar,X}:=\varprojlim_{n\rightarrow \infty} \mathcal{O}_X\otimes \U_\hbar/\mathcal{I}_{\mu,\Delta}^n$.
This sheaf again carries a natural flat connection $\nabla$ extended by continuity from the trivial
connection on $\Str_X\otimes \U_\hbar$. Note that $\nabla$ satisfies the following identity,
where $\xi_X$ stands for the velocity vector field induced by $\xi\in \g$:
\begin{equation}\label{eq:connection}
\nabla_{\xi_X}=\xi_{\jet\U_\hbar}-\frac{1}{\hbar^2}[\xi,\bullet].
\end{equation}
On the right hand side, the notation is as follows. We write $\xi_{\jet\U_\hbar}$ for the derivative
of the $G$-equivariant sheaf $\jet\U_\hbar$ induced by $\xi$. In the bracket $\xi$ is viewed as
an element of $\U_\hbar\hookrightarrow \Str_X\otimes \U_\hbar$. The reason why (\ref{eq:connection})
holds is that it holds on $\Str_X\otimes \U_\hbar$ -- there (\ref{eq:connection}) just says
that the action of $\g$ on $\Str_X\otimes \U_\hbar$ is diagonal   -- and then extends to $\jet \U_\hbar$
by continuity.

Note that we have a homomorphism of sheaves with flat connections $\Str_X\otimes \C[X]=p_{1*}\Str_{X\times X} \rightarrow \jet \Str_X$. This gives rise to a homomorphism $\Str_X\otimes \U_\hbar
\rightarrow \jet\Str_X$ (that sends $\hbar$ to $0$). This homomorphism is continuous
and is compatible with flat connections.
So it lifts to  a homomorphism of sheaves of algebras
with flat connections $\jet \U_{\hbar,X}\rightarrow \jet\Str_X$. Note that $\jet \U_{\hbar,X}\twoheadrightarrow \jet\Str_X$
when $X$ is a homogeneous $G$-space. This is because the homomorphism
$\Str_X\otimes \g\rightarrow T_X$  induced by the action of
$G$  is surjective.  We also observe that the fiber of $\jet\U_\hbar$
at $x\in X$ is the completion $\U_\hbar^{\wedge_{\mu(x)}}$.

We will apply these constructions in the case when $X=G/H$, where $H$ is a finite index subgroup of $Z_G(e)$.
Here $\jet\U_\hbar$ is the $G$-homogeneous pro-vector bundle whose fiber at $1H$ is $\U_\hbar^{\wedge_\chi}$.

\subsection{Sheaves $\Afrak_\hbar,\A_\hbar$}
Let us make a remark regarding finite dimensional representations of $\Walg$. Let $V$ be
a finite dimensional $\Walg$-module that also comes with a projective representation
of a finite index subgroup $Q_V\subset Q$ whose differential is the linear representation
of $\q$ that comes from restricting the $\Walg$-action to $\q$. Let $H$ be the preimage of
$Q_V$ in $Z_G(e)$.

Take the trivial filtration
on $V$ and form the Rees $\C[\hbar]$-module $V_\hbar$ and its completion $V_\hbar^{\wedge_0}$.
Then $\underline{\A}_\hbar^{\wedge_0}:=\End_{\C[[\hbar]]}(V_\hbar^{\wedge_0})$ is an algebra
that comes with a $Q_V\times \C^\times$-action by automorphisms and with a $Q_V\times \C^\times$-equivariant
$\C[[\hbar]]$-algebra homomorphism $\Walg_\hbar^{\wedge_\chi}\rightarrow \underline{\A}_\hbar^{\wedge_0}$.
Set $\A_\hbar^{\wedge_\chi}:=\Weyl_\hbar^{\wedge_0}\widehat{\otimes}_{\C[[\hbar]]}\underline{\A}_\hbar^{\wedge_0}$.
This algebra carries a $Q_V\times \C^\times$-action by automorphisms and, thanks to (\ref{eq:decomp}),
comes with a $Q_V\times \C^\times$-equivariant $\C[[\hbar]]$-algebra homomorphism
$\U_\hbar^{\wedge_\chi}\rightarrow \A_\hbar^{\wedge_\chi}$. This homomorphism allows to extend
a $Q_V$-action on $\A_\hbar^{\wedge_\chi}$ to an $H$-action because the action of
$H$ on $\U_{\hbar}^{\wedge_\chi}$ is Hamiltonian.

So we can form a $G$-homogeneous pro-vector bundle $\Afrak_\hbar$ on $G/H$ with fiber
$\A_\hbar^{\wedge_\chi}$. It comes with a flat connection $\nabla$ given by a formula
similar to (\ref{eq:connection}), namely:
\begin{equation}\label{eq:connection1}
\nabla_{\xi_X}=\xi_{\Afrak_\hbar}-\frac{1}{\hbar^2}[\xi,\bullet].
\end{equation}
So we get a $G\times \C^\times$-equivariant homomorphism of $\C[[\hbar]]$-algebras
$\jet\U_\hbar\rightarrow \Afrak_\hbar$ that intertwines the flat connections.

Now let us describe the bundle $\Afrak_\hbar/(\hbar)$ with a flat connection.
Consider the equivariant Azumaya algebra $A:=G\times^H \underline{A}$, where
$\underline{A}:=\underline{\A}_\hbar^{\wedge_0}/(\hbar)(=\operatorname{End}(V))$.

\begin{Lem}
The sheaf of algebras $\Afrak:=\Afrak_\hbar/(\hbar)$ with a flat connection is isomorphic
to $\jet A$.
\end{Lem}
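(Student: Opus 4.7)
The plan is to identify both $\Afrak$ and $\jet A$ as $G$-equivariant pro-coherent sheaves of algebras on $G/H$ with $G$-invariant flat connection, by matching their fibers at the basepoint $1H$ as $H$-equivariant algebras. Once this is done, a general reconstruction principle (a $G$-equivariant pro-coherent sheaf of algebras on $G/H$ with $G$-invariant flat connection whose fiber at $1H$ has the form $\widehat{\Str}_{G/H,1H}\otimes E$ for an $H$-algebra $E$ with diagonal action is canonically isomorphic to $\jet(G\times^H E)$) applied to $E=\End(V)$ yields the desired isomorphism $\Afrak\cong\jet A$.

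First I would compute the fiber of $\Afrak$ at $1H$. By construction, the fiber of $\Afrak_\hbar$ is $\A_\hbar^{\wedge_\chi}=\Weyl_\hbar^{\wedge_0}\widehat{\otimes}_{\C[[\hbar]]}\underline{\A}_\hbar^{\wedge_0}$. Because the filtration on $V$ is trivial, $V_\hbar^{\wedge_0}=V[[\hbar]]$ and $\underline{\A}_\hbar^{\wedge_0}=\End(V)[[\hbar]]$, so modulo $\hbar$ the second factor becomes $\End(V)$; modulo $\hbar$ the first factor becomes $\widehat{\operatorname{Sym}}(\mathsf{V})$ with $\mathsf{V}=[\g,f]$. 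Meanwhile, $\jet A|_{1H}=\widehat{\Str}_{G/H,1H}\otimes\End(V)$, and the standard $\slf_2$-splitting $\g=\mathsf{V}\oplus\h$ produces an isomorphism $\mathsf{V}\cong\g/\h\cong T_{1H}(G/H)$ and hence an algebra isomorphism $\widehat{\operatorname{Sym}}(\mathsf{V})\cong\widehat{\Str}_{G/H,1H}$.

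Next I would verify that this fiber identification is $H$-equivariant. On the $\End(V)$ factor both $H$-actions come from the same inflated projective representation, so only the other factor needs attention. The geometric action on $\widehat{\Str}_{G/H,1H}$ linearizes to the adjoint action of $H$ on $\g/\h$. On the $\Afrak$ side the $H$-action on $\A_\hbar^{\wedge_\chi}$ extends the $Q_V$-action via the Hamiltonian formula $\xi\mapsto\hbar^{-2}[\xi,\cdot]$ transported through $\U_\hbar^{\wedge_\chi}\to\A_\hbar^{\wedge_\chi}$; modulo $\hbar$ this becomes the Hamiltonian derivation $\{\xi,\cdot\}$ on $\U_\hbar^{\wedge_\chi}/(\hbar)=\C[\g^*]^{\wedge_\chi}$, i.e. the infinitesimal coadjoint action on the formal neighborhood of $\chi$. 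Since $H$ stabilizes $\chi$ and since (\ref{eq:decomp}) corresponds geometrically to the local product structure $\g^*\cong(G/H)\times S$ around $\chi$ furnished by the Slodowy slice, the infinitesimal coadjoint action restricts to the linearized adjoint action on the Weyl factor $\widehat{\operatorname{Sym}}(\mathsf{V})$, which agrees with the adjoint action on $\g/\h$ under our identification.

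Finally, the flat connections match automatically. On $\jet A$ the connection is the canonical jet connection; on $\Afrak$ it is given by (\ref{eq:connection1}). Both are $G$-invariant, and the fact that the map $\jet\U_\hbar\to\Afrak_\hbar$ intertwines the analogous connections forces the induced connection on $\Afrak$ to coincide with the jet connection under the above fiber identification. I expect the main obstacle to be the $H$-equivariance check in the second step: specifically, showing that the Hamiltonian action on $\widehat{\operatorname{Sym}}(\mathsf{V})$ coincides with the geometric $H$-action on $\widehat{\Str}_{G/H,1H}$. The decomposition (\ref{eq:decomp}) is only $Q$-equivariant, so the argument must carefully compare how the Hamiltonian formula for $\xi\in\mathfrak{n}\subset\h$ (the nilpotent directions outside $\mathfrak{q}$) reproduces precisely the infinitesimal action on the formal neighborhood of $1H$ in $G/H$ coming from adjoint action on $\g/\h$.
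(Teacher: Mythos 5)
Your high-level architecture matches the paper's: both treat $\Afrak$ and $\jet A$ as $G$-equivariant pro-coherent sheaves with $G$-invariant flat connections and try to match them via their $H$-equivariant data at the basepoint $1H$. However, the ``general reconstruction principle'' you invoke is flawed in a way that hides the real content of the lemma. You take the fiber of $\jet(G\times^H E)$ at $1H$ to be $\widehat{\Str}_{G/H,1H}\otimes E$ with the \emph{diagonal} $H$-action. This cannot be right when $H$ is not reductive. Indeed, an $H$-equivariant formal section $s:(G/H)^{\wedge}_{1H}\rightarrow G$ would have to satisfy $s(h\cdot 1H)=h\cdot s(1H)$, i.e.\ $s(1H)=h$ for every $h\in H$; so no such section exists unless $H$ is trivial. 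Equivalently, there is in general no $H$-stable complement to $\h$ in $\g$: here $H=Z_G(e)$ (or a finite-index subgroup) has a nontrivial unipotent radical that does not preserve $T=[\g,f]$. Consequently the $H$-action on $(\jet A)_{1H}$ genuinely mixes the $\widehat{\Str}$ and $\operatorname{End}(V)$ factors. One can already see a contradiction in your setup: in the diagonal model the residue at $1H$ is $\operatorname{End}(V)$ with the trivial $H$-action, whereas the actual fiber $A_{1H}$ carries the nontrivial (inflated projective) $H$-action. Your second paragraph asserts ``on the $\operatorname{End}(V)$ factor both $H$-actions come from the same inflated projective representation, so only the other factor needs attention'' --- this is where the mistake enters; the nilpotent directions in $\h$ act on the full tensor product via a Hamiltonian formula, not factor-by-factor.

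The paper confronts exactly this point. It identifies $\Afrak$ with $G\times^H(\C[[T]]\otimes\underline{A})$ where the $Q_V$-action is diagonal but the $\h$-action is \emph{not}: it is the derivation $\{\xi,\bullet\}$ where $\xi\in\h$ is mapped through $\h\hookrightarrow\C[\g^*]^{\wedge_\chi}\twoheadrightarrow\C[[T]]=Z(\C[[T]]\otimes\underline{A})$ and the bracket is the Poisson bracket supplied by the deformation $\Afrak_\hbar$. The paper then observes that $\jet A$ has the same description; this is the step your ``reconstruction principle'' was meant to replace, but it needs the correct twisted $H$-action, not the diagonal one. Relatedly, your claim that the connections ``match automatically'' by $G$-invariance is too quick: a $G$-homogeneous pro-bundle can admit several $G$-invariant flat connections, and the content of the paper's argument at this point is to derive the explicit formula (\ref{eq:connection3}). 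This requires knowing that the composite $\jet S(\g)\rightarrow\Afrak$ lands in the center of $\Afrak$ (so that the commutator in (\ref{eq:connection1}) degenerates mod $\hbar$ to a Poisson bracket of a central element) --- a step you do not perform. In short, you have the right outline, but the two places you declare ``automatic'' or handle ``by matching fibers'' (the $H$-action of the unipotent radical, and the connection formula) are precisely what the paper's proof is about, and the diagonal-action form of your reconstruction principle would lead to a wrong identification.
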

\begin{proof}
%The connection on $\Afrak$ is induced from $\jet S(\g)=\jet \U_\hbar/(\hbar)$. On $\jet S(\g)$
%the connection is given by
%\begin{equation}\label{eq:connection2}
%\nabla_{\xi_X}=\xi_{\jet S(\g)}-\{\xi,\bullet\}.
%\end{equation}
%Note that $\C[S]^{\wedge_\chi}\rightarrow \underline{A}$ factors through the residue field
%of $\C[S]^{\wedge_\chi}$. A
We start by obtaining a formula for the connection on $\Afrak$.
Note that we have a  bracket map $Z(\Afrak)
\otimes \Afrak\rightarrow \Afrak$ thanks to the presence of the deformation
$\Afrak_{\hbar}$. Here we write $Z(\Afrak)$ for the center of $\Afrak$, of course,
this is $\jet\Str_X$.

Since we take the trivial filtration on $V$, we see that
the algebra homomorphism $\C[S]^{\wedge_\chi}\rightarrow \underline{A}$ factors through the residue field
of $\C[S]^{\wedge_\chi}$.
Therefore the image of $\jet S(\g)\rightarrow \Afrak$ lies in the
center of $\Afrak$ and, in particular, the subalgebra of flat sections,
$S(\g)\subset \jet S(\g)$ lies in $Z(\Afrak)$. Because of this we get the bracket map
$\g\otimes \mathfrak{A}\rightarrow \mathfrak{A}$. Considering (\ref{eq:connection1})
modulo $\hbar$,  we see that the connection on $\Afrak$ is also given by
\begin{equation}\label{eq:connection3}
\nabla_{\xi_X}=\xi_{\Afrak}-\{\xi,\bullet\}.
\end{equation}

Set $T:=[\g,f]$, recall that this is a symplectic vector space. It is identified with the tangent space
$T_{1H}(G/H)$.
Now we observe that $\Afrak$ is the homogeneous sheaf of algebras $G\times^{H}(\C[[T]]\otimes \underline{A})$ on $G/H$, where the action of
$H$ on $\C[[T]]\otimes \underline{A}$ is diagonal with the natural action on $\C[[T]]$
coming from the identification $T=T_{1H}(G/H)$ and the action on $\underline{A}$ that factors
through the natural action of the quotient $Q_V$ of $H$.
%The connection is given by (\ref{eq:connection3}).

Now we observe that  the sheaf of algebras with a flat connection $\jet A$ has the same description.
Namely, $\jet A$ is the homogeneous vector bundle of algebras with fiber $A^{\wedge_{1H}}$ at $1H\in G/H$.
This fiber $H$-equivariantly identifies with $\C[[T]]\otimes \underline{A}$: we have an embedding
$\C[[T]]\hookrightarrow A^{\wedge_{1H}}$ coming from $\jet\Str_X\hookrightarrow \jet\A$ and also
an embedding $\underline{A}\hookrightarrow A^{\wedge_{1H}}$, together they give rise to the
required isomorphism $\C[[T]]\otimes \underline{A}\xrightarrow{\sim} A^{\wedge_{1H}}$. So
we get a $G$-equivariant sheaf of algebras isomorphism
\begin{equation}\label{eq:jet_iso}\Afrak\xrightarrow{\sim} \jet A.
\end{equation}

Now note that, by the construction, the flat connection on $\jet A$ satisfies
\begin{equation}\label{eq:connection31}
\nabla_{\xi_X}=\xi_{\jet A}-\xi_A,
\end{equation}
where we write $\xi_A$ for the derivation of $\jet A$ coming from the derivation in the second
factor of $\Str_X\boxtimes A$.

We claim that (\ref{eq:jet_iso}) intertwines the flat connections, which will finish the proof.
Since the isomorphism is $G$-equivariant, it is enough to show that it intertwines $\{\xi,\bullet\}$
with $\xi_A$. Note that thanks to (\ref{eq:connection3}) and (\ref{eq:connection31}),
both $\{\xi,\bullet\},\xi_A$ are $\Str_{G/H}$-linear. Thanks to the $G$-equivariance
of both $\{\xi,\bullet\},\xi_A$, it is enough to check that they coincide on the fiber at $1H$,
which is $A^{\wedge_{1H}}$. On that fiber both maps are the derivatives induced by the
$G$-equivariant structure on $A$.
\end{proof}

So we see that $\Afrak_\hbar$ is a formal deformation of $\jet A$.
Set $\A_\hbar=\Afrak_\hbar^{\nabla}$, the sheaf of flat sections. This is a $G$-equivariant sheaf of $\C[[\hbar]]$-algebras
on $G/H$ (but not a sheaf of $\Str_{G/H}$-modules).

\begin{Prop}\label{Prop:flat_deformation}
The sheaf $\A_\hbar$ is a formal deformation on $A$.
\end{Prop}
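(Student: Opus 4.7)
My plan is to establish three things about $\A_\hbar := \Afrak_\hbar^\nabla$: it is $\hbar$-torsion-free, $\hbar$-adically complete, and its reduction modulo $\hbar$ coincides with $A$ as a sheaf of algebras on $G/H$. The central tool is the algebraic de Rham complex
\begin{equation*}
\mathrm{DR}^\bullet(\Afrak_\hbar):\quad \Afrak_\hbar \xrightarrow{\nabla} \Afrak_\hbar \otimes_{\Str_{G/H}} \Omega^1_{G/H} \xrightarrow{\nabla} \Afrak_\hbar \otimes_{\Str_{G/H}} \Omega^2_{G/H} \to \cdots,
\end{equation*}
whose kernel in degree $0$ is, by definition, $\A_\hbar$.

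The first step is to show that this complex is acyclic in positive degrees. Filter $\Afrak_\hbar$ by powers of $\hbar$; since $\Afrak_\hbar$ is flat over $\C[[\hbar]]$, each associated graded piece is a copy of $\Afrak_\hbar/\hbar\Afrak_\hbar \cong \jet A$ (with its canonical connection, using the previous lemma). The associated graded of the de Rham complex is therefore a sum of copies of $\mathrm{DR}^\bullet(\jet A)$, and acyclicity in positive degrees of the latter is the formal Poincar\'e lemma for the jet bundle: in formal local coordinates $t_1,\ldots,t_n$ around a point of $G/H$, sections of $\jet A$ are $A$-valued formal power series in the $t_i$ and $\nabla$ is total differentiation in the $t_i$, which admits the standard contracting homotopy. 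An induction on $n$ applied to the short exact sequences $0\to \Afrak \to \Afrak_\hbar/\hbar^{n+1}\Afrak_\hbar \to \Afrak_\hbar/\hbar^n\Afrak_\hbar \to 0$, combined with Mittag--Leffler and the $\hbar$-adic completeness of $\Afrak_\hbar$ (which holds by construction since the fibers $\A_\hbar^{\wedge_\chi}$ are complete), lifts this acyclicity from the graded pieces to $\mathrm{DR}^\bullet(\Afrak_\hbar)$ itself.

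Given the acyclicity, consider the short exact sequence of de Rham complexes
\begin{equation*}
0 \to \mathrm{DR}^\bullet(\Afrak_\hbar) \xrightarrow{\hbar} \mathrm{DR}^\bullet(\Afrak_\hbar) \to \mathrm{DR}^\bullet(\jet A) \to 0,
\end{equation*}
term-wise exact by $\C[[\hbar]]$-flatness of $\Afrak_\hbar$. Each of the three complexes has cohomology concentrated in degree $0$, equal to $\A_\hbar$, $\A_\hbar$, and $A$ respectively (the rightmost being the classical identification of flat sections of $\jet A$ with $A$). The associated long exact sequence therefore collapses to
\begin{equation*}
0 \to \A_\hbar \xrightarrow{\hbar} \A_\hbar \to A \to 0,
\end{equation*}
which simultaneously yields $\hbar$-torsion-freeness of $\A_\hbar$ and the identification $\A_\hbar/\hbar\A_\hbar \cong A$. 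That this identification respects the algebra structure is automatic because $\nabla$ is a derivation of $\Afrak_\hbar$, so its flat sections form a subalgebra and the reduction map is a ring homomorphism. Running the same argument with $\hbar^n$ in place of $\hbar$ and taking the inverse limit delivers $\hbar$-adic completeness of $\A_\hbar$.

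The main obstacle is the first step: controlling the passage between $\mathrm{DR}^\bullet(\Afrak_\hbar)$ and its $\hbar$-adic associated graded, and verifying the formal Poincar\'e lemma cleanly in the jet-bundle setting on a homogeneous space. Once that is in place the remainder is a formal snake-lemma computation.
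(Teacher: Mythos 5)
Your proof is correct but follows a genuinely different route from the paper. The paper's argument is short and concrete: torsion-freeness and left-exactness of the flat-sections functor give the injection $\A_\hbar/(\hbar)\hookrightarrow A$, and then surjectivity is reduced by $G$-equivariance to a statement over the formal neighborhood of $1H$, which is verified directly by tracing the map $\U_\hbar^{\wedge_\chi}\rightarrow \A_\hbar^{\wedge_{1H}}$ through the decomposition (\ref{eq:decomp}): one shows the centralizer of $\Weyl_\hbar^{\wedge_0}$ in $\A_\hbar^{\wedge_{1H}}$ has rank $\dim\underline{A}$, using that $\Walg_\hbar^{\wedge_\chi}/\I_\hbar^{\wedge_\chi}$ is a sum of matrix algebras over $\C[[\hbar]]$ of that rank. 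You instead prove the stronger statement that the entire de Rham complex of $\Afrak_\hbar$ is acyclic in positive degrees, via the formal Poincar\'{e} lemma for $\jet A$ lifted along the $\hbar$-adic filtration, and then read off torsion-freeness, completeness and $\A_\hbar/(\hbar)\cong A$ all at once from the snake lemma. Your route is more uniform (it would apply to any flat $\C[[\hbar]]$-deformation of a jet bundle of an Azumaya algebra, with no reference to W-algebras) and has the virtue of deriving completeness rather than asserting it; the price is more infrastructure to verify rigorously — the Poincar\'{e} lemma in the pro-coherent setting, the compatibility of $\varprojlim$ with the de Rham complex, and the Mittag--Leffler step — whereas the paper needs only left-exactness plus a one-point computation. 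One small imprecision in your sketch: locally, sections of $\jet A$ are $A$-valued power series in \emph{formal} variables (jets along the diagonal), and $\nabla$ differentiates in both the actual coordinates and the formal ones; only after a change of variables does it become pure formal differentiation, which is what the contracting homotopy applies to.
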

\begin{proof}
Note that $\A_\hbar$ is a subsheaf in $\Afrak_\hbar$, hence it is flat over $\C[[\hbar]]$.
The subsheaf $\A_\hbar$ is closed in the $\hbar$-adic topology. Hence it is complete and separated in the $\hbar$-adic topology. So we only need  to check
that $\A_\hbar/(\hbar)=A$.  Note that, since taking the flat sections
is a left exact functor, we have $\A_\hbar/(\hbar)\hookrightarrow A$.

Note that $(\A_\hbar/(\hbar))^{\wedge_{1H}}=\A_\hbar^{\wedge_{1H}}/(\hbar)$.
Since $\A_\hbar$ is $G$-equivariant, we have that $\A_\hbar/(\hbar)\hookrightarrow A$
is an isomorphism if and only if
\begin{equation}\label{eq:comp_iso}\A_\hbar^{\wedge_{1H}}/(\hbar)\xrightarrow{\sim} A^{\wedge_{1H}}.
\end{equation}
Note that the right hand side is identified with $\C[[T]]\otimes \underline{A}$.

Now note that $\U_\hbar$ naturally maps to $\Gamma(\A_\hbar)$. This gives rise to a homomorphism
$\U_\hbar^{\wedge_\chi}\rightarrow \A_\hbar^{\wedge_{1H}}$. In particular, we get a homomorphism
$\Weyl_\hbar^{\wedge_0}\rightarrow \A_\hbar^{\wedge_{1H}}$. It is injective and induces a
decomposition $\A_\hbar^{\wedge_{1H}}/(\hbar)=\C[[T]]\otimes \underline{A}'$, where
$\underline{A}'$ is the quotient modulo $\hbar$  of the centralizer
of $\Weyl_\hbar^{\wedge_0}$ in  $\A_\hbar^{\wedge_{1H}}$.

Note that (\ref{eq:comp_iso})
is $\g$-equivariant. The action of $\g$ induces connections on the $\C[[T]]$-modules
$\A_\hbar^{\wedge_{1H}}/(\hbar), A^{\wedge_{1H}}$ and (\ref{eq:comp_iso})
intertwines these connections.
This reduces the
proof of (\ref{eq:comp_iso}) to checking that  $\dim \underline{A}'=\dim \underline{A}$.

Note that the kernel of
$\U_\hbar\rightarrow \Gamma(\A_\hbar)$ is $\J_\hbar$. So the homomorphism  $\Walg_\hbar^{\wedge_\chi}
\rightarrow \A_\hbar^{\wedge_{1H}}$
factors through $\Walg_\hbar^{\wedge_\chi}/\J_{\dagger,\hbar}^{\wedge_\chi}$. The latter is the direct sum
of matrix algebras over $\C[[\hbar]]$, each of dimension $\dim\underline{A}$. This implies
the equality $\dim \underline{A}'=\dim \underline{A}$ and finishes the proof.
\end{proof}

\subsection{Inequality for Goldie ranks}
The following proposition together with Theorem \ref{Thm:index} prove
Theorem \ref{Thm:main}.

\begin{Prop}\label{Prop:Godlie_ineq}
We have $\Grk(\J)\leqslant \dim V/\mathsf{ind}(A)$.
\end{Prop}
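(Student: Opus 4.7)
\textbf{Plan for Proposition \ref{Prop:Godlie_ineq}.} The strategy is to combine the injection $B_\hbar := \U_\hbar/\J_\hbar \hookrightarrow \Gamma(\A_\hbar)$ obtained in the proof of Proposition \ref{Prop:flat_deformation} with a Goldie rank computation at the generic point of $G/H$. First I would localize $\A_\hbar$ to the generic point $\eta \in G/H$: for each $n \geqslant 1$, the quotient $\A_\hbar/\hbar^n$ is \'{e}tale-locally the matrix algebra $\operatorname{Mat}_{\dim V}(\Str_{G/H}[\hbar]/(\hbar^n))$, hence a locally free (and so torsion-free) coherent $\Str_{G/H}$-module. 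Restriction to $\eta$ is therefore injective on global sections; passing to the inverse limit in $n$ gives $\Gamma(\A_\hbar) \hookrightarrow \A_{\hbar,\eta}$, and composition yields an embedding $B_\hbar \hookrightarrow \A_{\hbar,\eta}$.

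Next I would identify the localized target. The reduction $A_\eta$ is a central simple algebra over $\K := \C(G/H)$ of dimension $(\dim V)^2$ and index $\ind(A)$ (by Theorem \ref{Thm:index}). Any formal deformation of $A_\eta$ over $\K[[\hbar]]$ is trivial (the deformation theory of central simple algebras over a field is unobstructed), so $\A_{\hbar,\eta} \cong A_\eta \otimes_\K \K[[\hbar]]$ and therefore $\A_{\hbar,\eta}[\hbar^{-1}] \cong A_\eta \otimes_\K \K((\hbar))$. Writing $A_\eta = \operatorname{Mat}_m(D)$ for a central division $\K$-algebra $D$ with $m := \dim V/\ind(A)$, the base change equals $\operatorname{Mat}_m(D((\hbar)))$; since every nonzero element of $D((\hbar))$ has a unit leading coefficient and is itself a unit, $D((\hbar))$ is a division algebra, and $A_\eta((\hbar))$ is simple Artinian of Goldie rank $\dim V / \ind(A)$.

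Inverting $\hbar$ in the injection yields $B[\hbar^{\pm 1}] = B_\hbar[\hbar^{-1}] \hookrightarrow A_\eta((\hbar))$, which restricts in degree zero to an embedding $B := \U/\J \hookrightarrow A_\eta((\hbar))$. To conclude $\Grk(\J) = \Grk(B) \leqslant \dim V/\ind(A)$ one verifies that the embedding carries regular elements of $B$ to units of the simple Artinian target, so that $\operatorname{Frac}(B) = \operatorname{Mat}_{\Grk(B)}(D_B)$ itself embeds into $A_\eta((\hbar))$, and then compares the matrix sizes of the two central simple algebras. The main obstacle I anticipate is this final step: verifying Ore-compatibility and then performing the dimension count (over a common central subfield) relating $D_B$ and $D((\hbar))$. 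The $G$-equivariance of the construction and the compatibility with the $\hbar$-adic filtrations should be what powers both parts.
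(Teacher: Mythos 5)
Your starting point — the injection $\U_\hbar/\J_\hbar\hookrightarrow\Gamma(\A_\hbar)$ followed by a localization and a comparison of Goldie ranks — is the same as the paper's, but the middle of the argument runs into a structural obstruction. The sheaf $\A_\hbar$ is a sheaf of $\C[[\hbar]]$-algebras on $G/H$, \emph{not} a sheaf of $\Str_{G/H}$-modules (the paper says this explicitly when $\A_\hbar$ is introduced): the flat-sections functor kills the $\Str_{G/H}$-linearity, since $\A_\hbar$ is locally modeled on the noncommutative algebra $\U_\hbar/\J_\hbar$, whose center does not contain $\C[U]$. Consequently $\A_\hbar/\hbar^n$ is not a coherent $\Str_{G/H}$-module for $n\geqslant 2$, is certainly not \'etale-locally $\operatorname{Mat}_{\dim V}(\Str_{G/H}[\hbar]/\hbar^n)$, and the ``stalk at the generic point'' is not a $\K[[\hbar]]$-algebra. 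The appeal to unobstructedness of deformations of central simple algebras over $\K$ therefore does not apply: the deformation you are trying to trivialize is not $\K$-linear, and in fact it is a genuinely noncommutative deformation in which $\Str_{G/H}$ itself fails to remain central. Separately, your final step — that regular elements of $\U/\J$ become units in the target so that $\operatorname{Frac}(\U/\J)$ embeds — is a real gap that you flag yourself; this is exactly where the paper invokes Warfield's theorem on Goldie ranks of prime Noetherian ring extensions.

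The paper's actual argument avoids both issues by never leaving the affine level and never using $\Str_{G/H}$-linearity of the deformation. For an affine open $U$, $\Gamma(U,\A_\hbar)$ is a formal $\C[[\hbar]]$-deformation of the prime Noetherian algebra $\Gamma(U,A)$, hence itself prime Noetherian; Warfield's theorem applied to $\U_\hbar/\J_\hbar\subset\Gamma(U,\A_\hbar)$ gives $\Grk(\J)\leqslant\Grk(\Gamma(U,\A_\hbar))$. To compute the right-hand side, one shrinks $U$ so that $\Gamma(U,A)=\operatorname{Mat}_k(D)$ with $D$ a domain and $k=\dim V/\ind(A)$, and then lifts the \emph{constant} matrix units $\operatorname{Mat}_k(\C)\hookrightarrow\operatorname{Mat}_k(D)$ (not $\operatorname{Mat}_k(\C[U])$, which would require the $\Str$-linearity you don't have) to $\Gamma(U,\A_\hbar)$; the centralizer is a deformation of the domain $D$, hence a domain, so $\Grk(\Gamma(U,\A_\hbar))=k$. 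If you want to salvage your version, you would need to replace the generic-point/CSA argument by this matrix-unit-lifting over $\C$, and to cite or reprove Warfield's Goldie rank inequality for the final step.
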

\begin{proof}
The algebra $\U_\hbar/\J_\hbar$ is prime and Noetherian, because $\U/\J$ is so.
Note that the Goldie ranks of $\U/\J$ and $\U_\hbar/\J_{\hbar}$ coincide
(because $(\U_\hbar/\J_{\hbar})[\hbar^{-1}]=(\U/\J)[\hbar^{\pm 1}]$).
For any affine open subset $U\subset G/H$, we have an inclusion
$\U_\hbar/\J_{\hbar}\subset \Gamma(U,\A_\hbar)$. Further, $\Gamma(U,\A_{\hbar})/(\hbar)=
\Gamma(U,A)$. The algebra $\Gamma(U, A)$
is an Azumaya $\C[U]$-algebra hence is Noetherian and prime. The algebra
$\Gamma(U,\A_\hbar)$ is a formal deformation of $\Gamma(U,A)$.
Since $\Gamma(U,A)$ is
a prime Noetherian algebra, the algebra $\Gamma(U,\A_\hbar)$ is prime and Noetherian as well.
Indeed, the usual argument that the algebra of formal power series over a Noetherian ring
is Noetherian generalizes to show that $\Gamma(U,\A_\hbar)$ is Noetherian.
Further, let $I_\hbar,J_\hbar$ be two-sided ideals in $\Gamma(U,\A_\hbar)$. Let $I,J$
be their images in $\Gamma(U,A)$. If $I_\hbar J_\hbar=\{0\}$, then $IJ=\{0\}$. So
one of $I,J$, say $I$ to be definite, is zero. It follows that $I_\hbar\subset \hbar \Gamma(U,\A_\hbar)$.
Now we can replace $I_\hbar$ with $\hbar^{-1}I_\hbar$ and proceed in the same way.
This shows $\Gamma(U,\A_\hbar)$ is prime.
%Indeed, if $I_\hbar$ is a left ideal in $\Gamma(U,\A_\hbar)$ and $a_1,\ldots,a_k\in I$
%are such that $I_\hbar/\hbar I_\hbar\subset \Gamma(U,A)$ is generated by their
%images, then $a_1,\ldots,a_k$ generate $\Gamma(U,\A_\hbar)$. So $\Gamma(U,\A_\hbar)$
%is left Noetherian. Similarly, it is right Noetherian. Further, if $I_\hbar,J_\hbar
%\subset \Gamma(U,\A_\hbar)$ are two-sided ideals with $I_\hbar J_\hbar=0$, then
%$(I_\hbar/)$

By a result of Warfield, \cite[Theorem 1]{Warfield}, the inclusion  $\U_\hbar/\J_{\hbar}\subset \Gamma(U,\A_\hbar)$
implies the inequality of Goldie ranks $\Grk(\U_\hbar/\J_{\hbar})\leqslant
\Grk(\Gamma(U,\A_\hbar))$. So it remains to show that we can choose $U$ so that $\Grk(\Gamma(U,\A_\hbar))=\dim V/\mathsf{ind}(A)$,
which is the Goldie rank of $\Gamma(U,A)$.

We claim that we can pick $U$ in such a way that $\Gamma(U,A)=\operatorname{Mat}_k(D')$,
where $D'$ is a domain, and $k=\dim V/\mathsf{ind}(A)$. Indeed, for an arbitrary
affine open subset $U$ we have $\C(U)\otimes_{\C[U]}\Gamma(U,A)=\operatorname{Mat}_k(D)$
for a skew-field $D$ over $\C(U)$. The embedding $\operatorname{Mat}_k(\C)\hookrightarrow
\C(U)\otimes_{\C[U]}\Gamma(U,A)$ is defined over $\C[U_1]$ for some principal open affine
subset $U_1\subset U$. Let $\epsilon$ denote the primitive idempotent in $\operatorname{Mat}_k(\C)$.
Then $D':=\epsilon\Gamma(U_1,A)\epsilon$ satisfies $\Gamma(U_1,A)=\operatorname{Mat}_k(D')$.
Recall that $A$ is a vector bundle. So the natural map $\Gamma(U_1,A)\rightarrow
\C(U_1)\otimes_{\C[U_1]}\Gamma(U_1,A)=\operatorname{Mat}_k(D)$ is an inclusion.
Hence $D'\hookrightarrow D$. The latter is an algebra homomorphism, hence $D'$
is a domain. It remains to replace $U$ with $U_1$.

Since $\Gamma(U,\A_\hbar)$
is a formal deformation of $\operatorname{Mat}_k(D')$, the inclusion $\operatorname{Mat}_k(\C)
\hookrightarrow \operatorname{Mat}_k(D')$ lifts to $\operatorname{Mat}_k(\C)\hookrightarrow
\Gamma(U,\A_\hbar)$. Indeed, we can lift the idempotent $\epsilon\in \Gamma(U_1,A)$
to an idempotent $\epsilon_\hbar \in \Gamma(U_1,\A_\hbar)$. The right $\epsilon_\hbar \Gamma(U_1,\A_\hbar)\epsilon_\hbar$-module $\Gamma(U_1,\A_\hbar)\epsilon_\hbar$ is free
of rank $k$ because the $\epsilon\Gamma(U_1,A)\epsilon$-module $\Gamma(U_1,A)\epsilon$
is so. Also the
algebra homomorphism
$$\Gamma(U_1,\A_\hbar)\rightarrow
\operatorname{End}_{\epsilon_\hbar \Gamma(U_1,\A_\hbar)\epsilon_\hbar}(\Gamma(U_1,\A_\hbar)\epsilon_\hbar)$$
is an isomorphism because
$$\Gamma(U_1,A)\rightarrow
\operatorname{End}_{\epsilon \Gamma(U_1,A)\epsilon}(\Gamma(U_1,A)\epsilon)$$
is an isomorphism.

The algebra $D'_\hbar:=\epsilon_\hbar \Gamma(U,\A_\hbar)\epsilon_\hbar$  is the formal deformation of $D$. So $D'_\hbar$ is a domain,  $\Gamma(U,\A_\hbar)=
\operatorname{Mat}_k(D'_\hbar)$, and the Goldie rank of $\Gamma(U,\A_\hbar)$ equals $k$. This finishes the proof
of $\Grk(\J)\leqslant \dim V/\mathsf{ind}(A)$.
\end{proof}

%We also note that the sheaf $\A_\hbar$ carries a natural action of $\C^\times$ by construction
%($\Afrak_{\hbar}$ does and $\nabla$ is $\C^\times$-invariant).

\section{Example of computation}\label{S_example}
Here we will use Theorem \ref{Thm:main} to
revisit  an example of a completely prime primitive ideal in \cite{Premet_Goldie} that corresponds
to an irreducible representation of $\Walg$ with large dimension.
%
%Let $d^\circ_V$ denote the GCD of the dimensions of projective representations of $Q^\circ$
%with the same Schur multiplier as that of $V$. Clearly, $d^\circ_V$

%Here we will explain how to (partially) determine the Schur multiplier of $V$  starting from $\J\in %\operatorname{Prim}_{\Orb}(\U)$. This is based on considering categories $\mathcal{O}$ for W-algebras
%introduced in \cite{BGK} and further studied in \cite{W_OCat}. We will use these results to
%revisit  an example of a completely prime primitive ideal in \cite{Premet_Goldie} that corresponds
%to an irreducible representation of $\Walg$ with large dimension.

\subsection{Main result}
Let us start by recalling a classical theorem of Duflo.

Let us write $\mathfrak{t}$ for a Cartan subalgebra of $\g$. For $\lambda\in \mathfrak{t}^*$
let $L(\lambda)$ denote the irreducible module in the usual BGG category
$\mathcal{O}$ with highest weight $\lambda-\rho$, where $\rho$ is half the sum
of the positive roots. Let $\J(\lambda)$ denote the annihilator of $L(\lambda)$
in $\U$. As Duflo proved, the ideals $\J(\lambda)$ exhaust the primitive ideals
of $\U$.

Here is the main result of this section.

Let $\g=\mathfrak{sp}_{2n}$. We consider the primitive ideal $\J:=\J(\rho/2)$ in $\U$.
It follows from results of McGovern that $\Grk(\J)=1$, \cite{McGovern}. By \cite[Remark 4.3]{Premet_Goldie},
the dimension of the corresponding representation of $\Walg$ is bigger than $1$.
We will independently show that
the corresponding representation of $\Walg$ restricts to a spinor representation
of $\mathfrak{q}$, hence the inequality in Theorem \ref{Thm:main} becomes an
equality. From here we will deduce the equality $\Grk(\J)=1$.

Here is a complete statement.

\begin{Prop}\label{Prop:example}
Let $\g=\mathfrak{sp}_{2n}$ with $n>2$. Set $\J=\J(\rho/2)$. Then the following are true:
\begin{enumerate}
\item For $n=2m$, the $A$-orbit in $\Irr_{fin}(\Walg)$ corresponding to $\J$ consists of 2 irreducible representations.
Their restrictions to $\q=\mathfrak{so}_n$ are the (non-isomorphic) half-spinor representations.
\item For $n=2m+1$, the $A$-orbit in $\Irr_{fin}(\Walg)$ corresponding to $\J$ consists a single irreducible
representation. Its restriction to $\q=\mathfrak{so}_n$ is the spinor representation.
\item $\Grk(\J)=1$.
\end{enumerate}
\end{Prop}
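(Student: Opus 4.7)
The plan is to identify the nilpotent orbit $\Orb$ attached to $\J$, compute the component group $A$, pin down the $\Walg$-module $V$ up to its restriction to $\q$, and then invoke Theorem \ref{Thm:main} together with Examples \ref{Ex:spinor} and \ref{Ex:half_spinor}.

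First, I would show that the orbit $\Orb$ associated to $\J(\rho/2)$ has Jordan type $(2^n)$ in $\mathfrak{sp}_{2n}$. This is a standard Kazhdan--Lusztig computation in type $C_n$, and is also consistent with Premet's codimension formula $\dim(\Walg/\I)=2^{n-1}$ for $\I=\J_\dagger$. A direct $\slf_2$-triple computation for this orbit shows that the reductive part of $Z_G(e)$ is $\mathrm{O}_n$, so $Q^\circ\cong \operatorname{SO}_n$, $\q\cong \mathfrak{so}_n$, and $A=Q/Q^\circ\cong \Z/2\Z$.

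Second, using the bijection $\Prim_{\Orb}(\U)\leftrightarrow \Irr_{fin}(\Walg)/A$ of \cite{HC} together with the identity
$$\dim(\Walg/\I)=|A/A_V|\cdot(\dim V)^2=2^{n-1},$$
a parity analysis forces $|A/A_V|=1$ and $\dim V=2^m$ when $n=2m+1$, and $|A/A_V|=2$ with $\dim V=2^{m-1}$ when $n=2m$. These are exactly the dimensions of the spinor of $\mathfrak{so}_{2m+1}$ and of each half-spinor of $\mathfrak{so}_{2m}$, so dimensionally the candidates match.

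Third---and this is the main technical step---I need to identify $V|_\q$ as the (half-)spinor itself, rather than merely as a $\q$-module of the same dimension. The approach is to apply the algorithm of \cite{LO,BL} computing the $A$-orbit in $\Irr_{fin}(\Walg)$ associated to $\J$, and read off the highest weight of $V|_\q$. A parallel verification comes from noting that $V$ integrates to a projective representation of $Q^\circ=\operatorname{SO}_n$ whose Schur multiplier is controlled by the $Q$-action on $\Walg$; combined with the sharp dimension constraint from the previous step, this singles out the (half-)spinor among projective $\operatorname{SO}_n$-representations of the required type. This settles parts (1) and (2). Once $V|_\q$ is identified as the (half-)spinor, Examples \ref{Ex:spinor} and \ref{Ex:half_spinor} give $d_V=\dim V$, and Theorem \ref{Thm:main} yields $\Grk(\J)\leqslant \dim V/d_V=1$; since the Goldie rank is a positive integer, $\Grk(\J)=1$, proving (3) independently of McGovern's computation. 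The main obstacle is the third step: pinning down $V|_\q$ precisely, as opposed to only its dimension, requires either the full strength of the $\Walg$-module machinery of \cite{LO,BL} or a delicate direct argument exploiting the small dimension of $V|_\q$ and the nontriviality of its Schur multiplier.
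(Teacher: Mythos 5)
Your plan correctly identifies the orbit $(2^n)$, the groups $Q\cong\mathrm{O}_n$, $A\cong\Z/2$, and the target dimensions; in fact the dimension-count
$\dim(\Walg/\I)=|A/A_V|\cdot(\dim V)^2=2^{n-1}$
plus a parity argument is a slick way to get $\dim V$ and $|A/A_V|$. But this relies on Premet's computation of $\dim(\Walg/\I)$, which the paper explicitly sets out to \emph{rederive} rather than assume (the stated goal is an independent proof of the spinor identification). More importantly, the step you yourself flag as the ``main obstacle'' --- identifying $V|_{\q}$ as the (half-)spinor rather than merely a module of the right dimension --- is exactly where your argument stops. Knowing $\dim V = 2^m$ does not pin down $V|_{\q}$: the Schur multiplier could a priori be trivial, and then $V|_{\q}$ could, say, contain trivial summands. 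Your ``parallel verification'' via the Schur multiplier is the right idea in principle, but the Schur multiplier itself is not given in advance; it is determined by the $\tf_Q$-weights of $V$ modulo the root lattice, and computing those weights is precisely the content one cannot avoid.

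The paper fills this gap using highest weight theory for finite W-algebras (category $\mathcal{O}_\nu(\Walg)$ following Brundan--Goodwin--Kleshchev). It chooses $\nu$ so that $\underline{\g}=\z_\g(\nu)\cong\mathfrak{sl}_2^n$ with $e$ principal, so $\underline{\Walg}$ is the center of $U(\underline{\g})$ and $\underline{V}$ is one-dimensional; then it computes the $\tf_Q$-character of $\underline{V}=V^{\Walg_{>0}}$ explicitly via the shift $\delta$ of (\ref{eq:delta}), landing on $(\eta_1+\dots+\eta_m)/2$, the (half-)spinor highest weight. Irreducibility of $V$ over $\q$ is then extracted from the fact that this weight is minuscule and the $\nu$-grading bounds all other $\tf_Q$-weights from above (this generalizes \cite[Theorem 5.2.1]{W_1dim}). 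In particular the codimension $2^{n-1}$ and the Goldie rank $1$ come out as corollaries rather than inputs. Your plan, as written, does not carry out any version of this computation --- it points at \cite{LO,BL} or at ``a delicate direct argument'' without executing either --- so the argument is incomplete at exactly the step on which the whole proposition turns.
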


\subsection{Category $\mathcal{O}$ for $\Walg$}\label{SS_W_OCat}
Let $T_Q\subset Q$ denote the maximal torus and let $\nu:\C^\times\rightarrow Q$ be a one-parameter
subgroup that is generic in the sense that its centralizer in $\g$ coincides with the centralizer
of $T_Q$. The one-parameter subgroup $\nu$ gives rise to the weight decomposition
$\Walg=\bigoplus_{i\in \Z}\Walg_i$. Set $\Walg_{\geqslant 0}:=\bigoplus_{i\geqslant 0} \Walg_i,
\Walg_{>0}:=\bigoplus_{i>0}\Walg_i, \Ca_\nu(\Walg):=\Walg_{\geqslant 0}/
(\Walg_{\geqslant 0}\cap \Walg\Walg_{>0})$.

Following \cite{BGK}, define the category $\mathcal{O}_\nu(\Walg)$ as the full subcategory in the category of finitely
generated $\Walg$-modules consisting of all modules $M$ such that
\begin{enumerate}
\item $M$ admits a weight decomposition with respect to $\mathfrak{t}_Q$:
$M=\bigoplus_{\alpha\in \mathfrak{t}^*} M_\alpha$, where $M_\alpha:=\{m\in M|
xm=\langle\alpha,x\rangle m\}$.
\item $\Walg_{>0}$ acts on $M$ locally nilpotently.
\end{enumerate}
In particular, any finite dimensional irreducible $\Walg$-module lies in $\mathcal{O}$.
We remark that, modulo (1), (2) can be shown to be equivalent to the condition
that the real numbers $\operatorname{Re}\langle \alpha,\nu\rangle$ are bounded
from above.

Note that for $M\in \Walg\operatorname{-mod}$, the annihilator $M^{\Walg_{>0}}$ is
$\Walg_{\geqslant 0}$-stable and the action of $\Walg_{\geqslant 0}$ on
$M^{\Walg_{>0}}$ factors through $\Ca_\nu(\Walg)$. It turns out that if $M$ is simple
in $\mathcal{O}_\nu(\Walg)$, then $M^{\Ca_\nu(\Walg)}$ is a simple $\Ca_\nu(\Walg)$-module.
The assignment $M\mapsto M^{\Walg_{>0}}$ is a bijection between the simples
in $\mathcal{O}_\nu(\Walg)$ and the simple $\Ca_\nu(\Walg)$-modules. For an irreducible
$\Ca_\nu(\Walg)$-module $N$, we write $L_{\nu}(N)$ for the corresponding simple object
in $\mathcal{O}_\nu(\Walg)$.

The structure of the algebra $\Ca_\nu(\Walg)$ was determined in \cite{BGK}, see also
\cite{W_OCat}. Let $\underline{\g}$ denote the centralizer of $\nu$ in $\g$, note that
$e\in \underline{\g}$. Let $\underline{\U}$ denote $U(\underline{\g})$ and
$\underline{\Walg}$ denote the W-algebra for $(\underline{\g},e)$.
Then there is an isomorphism $\Ca_\nu(\Walg)\xrightarrow{\sim}\underline{\Walg}$,
see \cite[Section 4]{BGK}. This isomorphism is $T_Q$-equivariant but it
does not intertwine the quantum comoment maps $\mathfrak{t}_Q\rightarrow
\Ca_\nu(\Walg), \underline{\Walg}$. Rather it induces a shift by a character $\delta$
that we define now. Pick a Cartan subalgebra $\bar{\mathfrak{t}}\subset \g$
that contains $\mathfrak{t}_Q$ and $h$. Let $\Delta\subset \bar{\mathfrak{t}}^*$ be the root system.
Set $\Delta^-:=\{\beta\in \Delta| \langle\beta,\nu\rangle<0\}$.
We set
\begin{equation}\label{eq:delta}
\delta=\frac{1}{2}\sum_{\alpha\in \Delta^-, \langle\alpha,h\rangle=-1}\alpha+\sum_{\alpha\in \Delta^-,
\langle \alpha,h\rangle\leqslant -2}\alpha.
\end{equation}
Then the isomorphism $\Ca_\nu(\Walg)\xrightarrow{\sim}\underline{\Walg}$ restricts to $x\mapsto x-\langle\delta,x\rangle$
on $\mathfrak{t}_Q$.

Now pick a finite dimensional irreducible $\Walg$-module $V$. It lies in
$\mathcal{O}_\nu(\Walg)$ and so there is a unique finite dimensional irreducible
module $\underline{\Walg}$-module $\underline{V}$ with $V=L_\nu(\underline{V})$.
Let $\underline{V}$ correspond to the primitive ideal $\underline{\J}(\lambda)\subset \underline{\U}$
(annihilating the irreducible $\underline{\g}$-module with highest weight
$\lambda-\rho$). It was shown in \cite[Theorem 5.1.1]{W_1dim} that the primitive ideal
in $\U$ corresponding to $V$ is $\J(\lambda)$.

While we cannot read the Schur multiplier for the $Q_V$-action on $V$ from
$\underline{V}$, we can recover that for the $Q^\circ$-action. Namely,
$\tf_Q$ embeds into the center of $\underline{\Walg}$. The action of $\tf_Q$ on $\underline{V}$
under this embedding is  by $(\lambda-\rho)|_{\tf_Q}$. Therefore the action of $\tf_Q$
on $\underline{V}=V^{\Walg_{>0}}$ via the embedding $\tf_Q\hookrightarrow \Walg$ is via
\begin{equation}\label{eq:character}
(\lambda-\rho-\delta)|_{\mathfrak{t}_Q}.
\end{equation}
It follows that the weights of $\mathfrak{t}_Q$ in $V$ are congruent to $(\lambda-\delta)|_{\tf_Q}$.
This allows to recover the Schur multiplier for the projective action of $Q^\circ$ on $V$.

\subsection{Proof of Proposition \ref{Prop:example}}
Let $\g=\mathfrak{sp}_{2n}$. We consider the orbit $\Orb$ corresponding to
the partition $(2^n)$ (where the superscript indicates the multiplicity).
The group $Q$ is $\operatorname{O}_n$. It is easy to see that the codimension
of $\Orb$ in $\g$ is $n^2$.

\begin{proof}
The proof is in several steps.

{\it Step 1}. It is easy to see that $\dim  \VA(\J)=\dim \Orb$. Indeed,
the integral Weyl group for $\rho/2$ has type $B_{\lfloor n/2\rfloor}\times
D_{\lceil n/2\rceil}$. For the integral root system, $\rho/2$ is  dominant (and, of course, integral).
By \cite[Corollary 3.5]{Joseph}, this implies that $\dim \VA(\J)=\dim \g-\dim \g^\vee_{int}$,
where $\g^\vee_{int}$ stands for the integral subalgebra for $\rho/2$ in $\g^\vee=\mathfrak{so}_{2n+1}$.
Since $\dim \g^\vee_{int}=n^2$, the claim in the beginning of the paragraph follows.

{\it Step 2}.
The symplectic form we use to define $\mathfrak{sp}_{2n}$ is $\omega(u,v)=\sum_{i=1}^{n}(u_i v_{2n+1-i}-v_i u_{2n+1-i})$.
Identify $\bar{\tf}$ with $\C^n$ in a standard way, $\bar{\tf}=\operatorname{diag}(x_1,\ldots,x_n,-x_n,\ldots,-x_1)$.
When $n=2m$, $\tf_Q$ is
embedded into $\tf$ as $$\{(x_1,x_1,x_2,x_2,\ldots, x_m,x_m)\},$$ while for
$n=2m+1$, we get $$\tf_Q=\{(x_1,x_1,\ldots, x_m,x_m,0)\}.$$ We choose $\nu$ corresponding
to the vector $(x_1,\ldots,x_m)\in \Z^m$ with $x_1>x_2>\ldots>x_m>0$, it is dominant
for $\g$.

{\it Step 3}.
The subalgebra $\underline{\g}=\mathfrak{z}_\g(\nu)$ is $\mathfrak{sl}_2^n$ and $e$
is a principal nilpotent element in $\underline{\g}$. The W-algebra $\underline{\Walg}$
is, therefore, the center of $\underline{\U}$. Consider the primitive ideal $\underline{\J}(\rho/2)$
for $\underline{\g}$. It is minimal, so it gives rise to an irreducible representation of $\underline{\Walg}$,
to be denoted by $\underline{V}$. Set $V=L_\nu(\underline{V})$.  By \cite[Theorem 5.1.1]{W_1dim},
$\J(\rho/2)=\operatorname{Ann}_{\Walg}(V)^\dagger$. It follows that $\Orb\subset \VA(\J)$
and, since  the codimensions of $\Orb$ and $\VA(\J)$ in $\g$ coincide (both are equal to $n^2$),
we recover the equality $\VA(\J)=\Orb$. Equivalently,
$V$ is finite dimensional.

{\it Step 4}. Let us determine how $\tf_Q$ acts on $\underline{V}$. According to (\ref{eq:character}),
the action is by the character $(-\rho/2-\delta)|_{\tf_Q}$. Below in this step, we will compute this
character explicitly.

Note that $e$ is an even nilpotent element
so $\delta=\sum_{\alpha\in \Delta^-, \alpha(h)\leqslant -2}\alpha$. Note that $\delta|_{\tf_Q}=
\frac{1}{2}(\sum_{\alpha\in \Delta^-, \alpha(h)\neq 0}\alpha)|_{\tf_Q}$ because $\tf_Q$ centralizes
the $\slf_2$-triple $(e,h,f)$. So $(-\rho/2-\delta)|_{\tf_Q}=(\rho/2-\rho_0)|_{\tf_Q}$, where $\rho_0$
is $\frac{1}{2}\sum_{\alpha>0| \alpha(h)=0}\alpha$. The element $h\in \tf_Q$ equals $(1,-1,1,-1,\ldots)$.
The positive roots that vanish on $h$ are of the form $\epsilon_i-\epsilon_j$, where $i-j$  is even,
and $\epsilon_i+\epsilon_j$, where $i-j$ is odd.

It is easy to see that $\rho=n \epsilon_1+(n-1)\epsilon_2+\ldots+\epsilon_n$.
Let $\eta_i$ denote the $i$th coordinate function on $\tf_Q\cong \C^m$.

Consider the case of $n=2m$ first. Here
$$\rho|_{\tf_Q}=(4m-1)\eta_1+(4m-5)\eta_2+\ldots+3\eta_m.$$
We get
\begin{align*} &2\rho_0=(m-1)(\epsilon_1+\epsilon_2)+(m-3)(\epsilon_3+\epsilon_4)+\ldots +(1-m)(\epsilon_{2m-1}+\epsilon_{2m})+
m(\epsilon_1+\ldots+\epsilon_{2m})\\
&=(2m-1)(\epsilon_1+\epsilon_2)+(2m-3)(\epsilon_3+\epsilon_4)+\ldots+
(\epsilon_{2m-1}+\epsilon_{2m}).
\end{align*}
Hence $2\rho_0|_{\tf_Q}=(4m-2)\eta_1+\ldots+ 2\eta_m$.
We conclude that $(\rho/2-\rho_0)|_{\tf_Q}=(\eta_1+\ldots+\eta_m)/2$. This is the highest weight
of a half-spinor representation.

Now consider the case of $n=2m+1$. Here
$$\rho|_{\tf_Q}=(4m+1)\eta_1+(4m-3)\eta_2+\ldots+ 5\eta_m.$$
We get
\begin{align*} &2\rho_0=m\epsilon_1+(m-2)\epsilon_3+\ldots -m\epsilon_{2m+1}+(m-1)\epsilon_2+(m-3)\epsilon_4+\ldots
+(1-m)\epsilon_{m-1}+\\&+m(\epsilon_1+\epsilon_3+\ldots \epsilon_{2m+1})+(m+1)(\epsilon_2+\ldots+\epsilon_{2m})=\\
&=2m(\epsilon_1+\epsilon_2)+(2m-2)(\epsilon_3+\epsilon_4)+\ldots+ 2(\epsilon_{2m-1}+\epsilon_{2m}).
\end{align*}
So $2\rho_0|_{\tf_Q}=4m\eta_1+4(m-1)\eta_2+\ldots+ 4\eta_m$. It follows that $(\rho/2-\rho_0)|_{\tf_Q}=
(\eta_1+\ldots+\eta_m)/2$, the highest weight of the spinor representation.

{\it Step 5}. It remains to prove that $V$ is irreducible over  $\mathfrak{q}$.
Consider a more general situation. Let $\nu$ be as in Section  \ref{SS_W_OCat},
let $\underline{V}$ be an irreducible $\underline{\Walg}$-module such that $V=L_\nu(\underline{V})$
is finite dimensional. Assume that
\begin{itemize}
\item[(i)] $\dim \underline{V}=1$ (this always holds when $e$ is principal in $\underline{\g}$,
which is true in the case of interest for us),
\item[(ii)] The algebra $\mathfrak{q}$ is semisimple.
\item[(iii)] The action of $\tf_Q$ on $\underline{V}$ via $\tf_Q\hookrightarrow \Ca_\nu(\Walg)$ is by
a minuscule weight, say $\omega$.
\end{itemize}
We claim that in this case $V$ is irreducible over $\mathfrak{q}$ (with highest weight $\omega$).
This partially generalizes
\cite[Theorem 5.2.1]{W_1dim}. The proof is  similar but we provide it for readers
convenience.

Note that all $\tf_Q$-weights $\beta$ in $V$ are congruent to $\omega$ modulo the root lattice of
$\mathfrak{q}$ and also $\langle \beta,\nu\rangle\leqslant \langle\omega,\nu\rangle$ with
equality if and only if $\beta$ is a weight of $\underline{V}$. If $V$ is not irreducible
over $\mathfrak{q}$, then there is another highest weight, say $\omega'$. Since $\omega$
is minuscule, we have $\omega'\geqslant \omega$ (meaning that $\omega'-\omega$ is the sum of
positive roots). This contradicts  the inequality $\langle\omega',\nu\rangle\leqslant
\langle\omega,\nu\rangle$ and proves the irreducibility of $V$ over $\mathfrak{q}$.

{\it Step 6}. Let us now complete the proof.
By Step 4, $V$ satisfies condition (iii) of Step 5, while conditions (i),(ii) were established
above in the proof. So $V$ is the spinor representation of $\mathfrak{so}_{2m+1}$ when $n=2m+1$
and one of the half-spinor representations of $\mathfrak{so}_{2m}$ in the case when $n=2m$.
In the former case, $V$ is $Q$-stable because $\underline{V}$ is stable under $N_Q(\tf_Q)$.
In the latter case, $V$ is not stable because of an outer automorphism in $Q$
that  permutes the half-spinor representations. This proves (1),(2) of the proposition.
(3) now follows from Theorem \ref{Thm:main} and Examples \ref{Ex:spinor} (the case
of $n=2m+1$) and \ref{Ex:half_spinor} (the case of $n=2m$).
\end{proof}

\end{document}